\newcommand{\R}{\mathbb{R}}
\newcommand{\N}{\mathbb{N}}
\newcommand{\indicator}[1]{\mathbbm{1}_{#1}}
\renewcommand{\d}{\,d}
\newcommand{\restr}[2]{#1\!_{\restriction #2}}
\DeclarePairedDelimiter\abs{\lvert}{\rvert}%
\DeclarePairedDelimiter\norm{\lVert}{\rVert}%
\let\oldabs\abs
\def\abs{\@ifstar{\oldabs}{\oldabs*}}
\let\oldnorm\norm
\def\norm{\@ifstar{\oldnorm}{\oldnorm*}}
\newcommand{\fullstop}{\text{ .}}
\NewDocumentCommand{\lcro}{d<>m}{\IfValueTF{#1}{#1}{\left}[ #2 \IfValueTF{#1}{#1}{\right})}
\newtheorem{theorem}{Theorem}
\numberwithin{theorem}{section}
\newtheorem{lemma}[theorem]{Lemma}
\theoremstyle{definition}
\newtheorem{definition}[theorem]{Definition}
\newtheorem{remark}[theorem]{Remark}
\def\fcmp{\mathbin{\raise 0.6ex\hbox{\oalign{\hfil$\scriptscriptstyle \mathrm{o}$\hfil\cr\hfil$\scriptscriptstyle\mathrm{9}$\hfil}}}}
\let\mcmp\fcmp
\setlist[enumerate]{label=(\arabic*)}
\renewcommand{\d}{\,\mathrm d}
\newcommand{\Prob}{\mathscr P}
\RenewDocumentCommand{\Pr}{d<>m}{\Prob\IfValueTF{#1}{#1}{\!\left}(#2\IfValueTF{#1}{#1}{\right})}
\NewDocumentCommand{\Prp}{d<>m}{\Prob\!_p\IfValueTF{#1}{#1}{\!\left}(#2\IfValueTF{#1}{#1}{\right})}
\NewDocumentCommand{\SubP}{d<>m}{\Prob^\leq\IfValueTF{#1}{#1}{\left}(#2\IfValueTF{#1}{#1}{\right})}
\NewDocumentCommand{\E}{d<>omo}{\mathbb E \IfValueTF{#2}{^{#2}}{} \IfValueTF{#1}{#1}{\left}( #3 \IfValueTF{#4}{ \IfValueTF{#1}{#1}{\middle}| #4}{} \IfValueTF{#1}{#1}{\right})}
\DeclareMathOperator{\dis}{dis}
\newcommand{\disint}[2]{\dis_{#1}^{#2}}
\NewDocumentCommand{\br}{d<>mmm}{\IfValueTF{#1}{#1}{\left}#2 #4 \IfValueTF{#1}{#1}{\right}#3}
\NewDocumentCommand{\pa}{d<>m}{\IfValueTF{#1}{#1}{\left}( #2 \IfValueTF{#1}{#1}{\right})}
\NewDocumentCommand{\push}{d<>m}{\Prob\IfValueTF{#1}{#1}{\!\left}(#2\IfValueTF{#1}{#1}{\right})}
\NewDocumentCommand{\FunP}{d<>mm}{\mathscr F \IfValueTF{#1}{#1}{\left}( #2 \rightsquigarrow #3 \IfValueTF{#1}{#1}{\right})}
\DeclareMathOperator\domain{dom}
\NewDocumentCommand{\dom}{d<>m}{\domain\! \IfValueTF{#1}{#1}{\left}( #2 \IfValueTF{#1}{#1}{\right}) }
\DeclareMathOperator{\Couplings}{Cpl}
\NewDocumentCommand{\Cpl}{d<>mm}{\Couplings \IfValueTF{#1}{#1}{\left}( #2, #3 \IfValueTF{#1}{#1}{\right}) }
\NewDocumentCommand{\set}{d<>mo}{\IfValueTF{#1}{#1}{\left}\{ #2 \IfValueT{#3}{\,\IfValueTF{#1}{#1}{\middle}|\, #3} \IfValueTF{#1}{#1}{\right}\}}
\DeclareMathOperator{\proj}{proj}
\DeclareMathOperator{\undis}{int}
\newcommand{\und}[2]{\undis_{#1}^{#2}}
\newcommand{\ilcomment}[1]{{\fontfamily{cmss}\selectfont{}#1}}
\newcommand{\Xb}{\overline X}
\newcommand\funprod{\setbox0\hbox{$\prod$}\rlap{\hbox to \wd0{\hss$\circ$\hss}}\box0}
\newcommand{\I}{\mathcal I}
\newcommand{\dprod}{\mathbin{\dot\otimes}}
\DeclareMathOperator{\ShOp}{Per}
\let\epsilon\varepsilon
\newcommand{\meet}{\wedge}
\newcommand{\join}{\vee}
\newcommand{\itref}[1]{\ref{#1}}
\newcommand{\marg}[2]{#1_{\restriction #2}}
\let\comment\marginpar
\let\unused\tilde
\newcommand{\X}{\mathcal X}
\newcommand{\Y}{\mathcal Y}
\newcommand{\Z}{\mathcal Z}
\newcommand{\mocf}[1]{\omega_{#1}}
\newcommand{\moc}[2]{\omega_{#1}\pa{#2}}
\newcommand{\Sh}[2]{\ShOp\pa{#1,#2}}
\newcommand{\avg}[1]{\undis_{*}^{#1}}
\newcommand{\Wa}{\mathcal W_p}
\NewDocumentCommand{\W}{d<>mm}{\Wa \IfValueTF{#1}{#1}{\left}( #2, #3 \IfValueTF{#1}{#1}{\right}) }
\newcommand{\gobble}[1]{}
\newcommand{\D}{\rho}
\renewcommand{\I}{\mathcal I}
\let\comment\gobble
\let\ilcomment\gobble
\author{Manu Eder}
  \address{Department of Mathematics, University of Vienna, Austria}
  \email{manuel.eder@univie.ac.at}
  \date{\today}
\title{Compactness in Adapted Weak Topologies}
\begin{document}

\begin{abstract}
  Over the years a number of topologies for the set of laws of stochastic processes have been proposed. Building on the weak topology they all aim to capture more accurately the temporal structure of the processes.

  In a parallel paper we show that all of these topologies (i.e.\ the information topology of Hellwig, the nested distance topology of Pflug-Pichler, the extended weak convergence of Aldous and a topology built from Lasalle's notion of a causal transference plan) are equal in finite discrete time.
  Regrettably, the simple characterization of compactness given by Prokhorov's theorem for the weak topology fails to be true in this finer topology. This phenomenon is closely related to the failure of a natural metric for this topology to be complete. For certain problems, a \enquote{fix} consists in passing to the metric completion. Still, it also seems interesting to find out what compact sets look like in the uncompleted space.

  Here we give a characterization of compact sets in this adapted weak topology which is strongly reminiscent of the Arzelà-Ascoli theorem (with a dash of Prokhorov's theorem).
  The tools developed are also useful elsewhere. We give a different proof of the continuity of the conditionally independent gluing map of two measures with one marginal in common and in our companion paper the ideas developed here form the main non-algebraic ingredient in showing that the information topology introduced by Hellwig is equal to the nested weak topology of Pflug-Pichler.
\end{abstract}

\maketitle

\section{Introduction}

In Figure \ref{fig:Adap} we see the possible paths for two different stochastic processes. We'll think of each of the paths drawn as having the same probability $1/2$. The process on the left only branches at final time $2$, while the one on the right already branches at time $1$, but the branches don't move very far apart. The processes on the left and on the right are very close in Wasserstein distance, but their \enquote{information structure} is very different. For the process on the right we already know at time $1$, what is going to happen at time $2$, for the one on the left we don't. 

\begin{figure}
  \label{fig:Adap}
  \includegraphics[trim={0 1cm 0 4.5cm},clip,width=.8\linewidth]{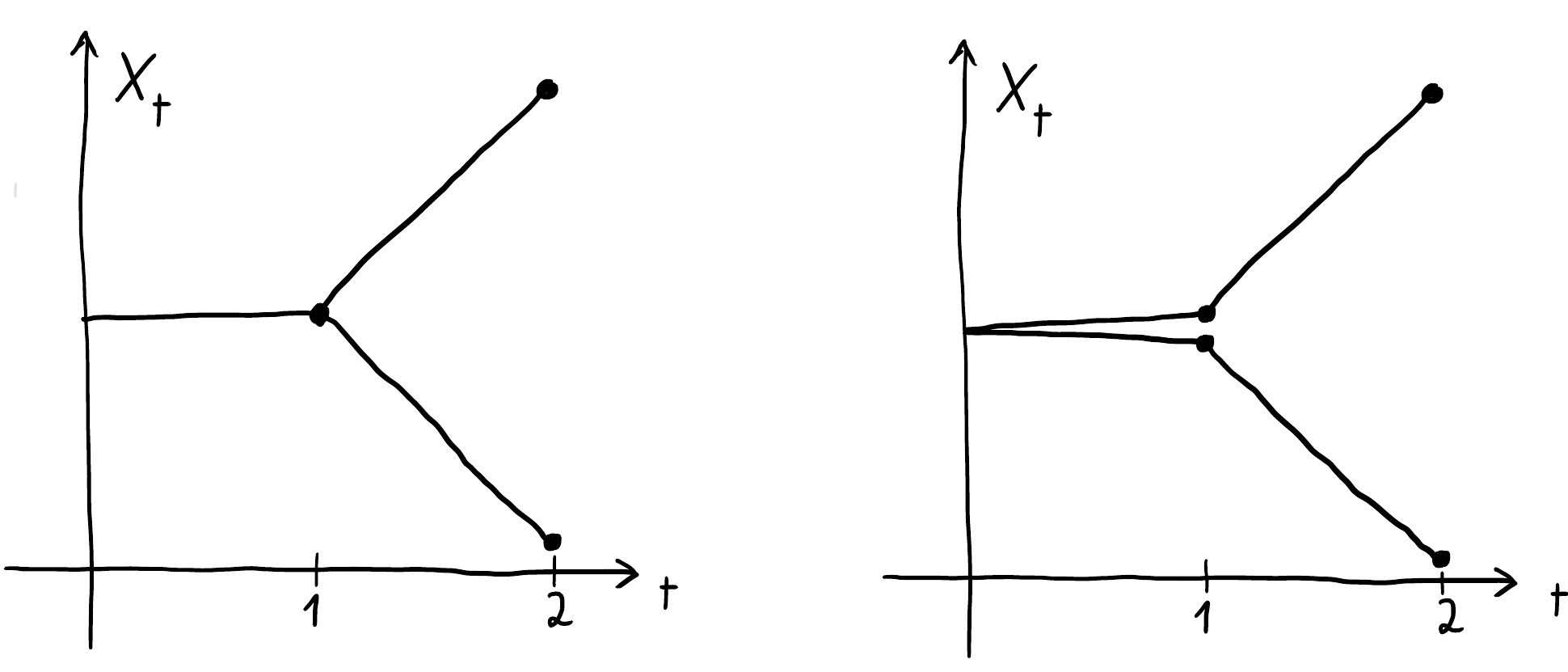}%
  \caption{Two processes which are very close in Wasserstein distance, but whose information structure is very different.}
\end{figure}

A number of authors have introduced topologies and/or metrics which respect this information structure of processes -- topologies for which, in particular, the two processes in Figure \ref{fig:Adap} are \emph{not} \enquote{close} to each other.
These are: Hellwig's information topology \cite{He96}, the nested distance of Pflug, Pichler and co-authors \cite{PfPi12, Pi13,  PfPi14, PfPi15, PfPi16, GlPfPi17} and the extended weak topology of Aldous \cite{Al81}. Lassalle's notion of a causal transference plan, \cite{Las18}, can also be utilized to define a metric by restricting the transference plans in the definition of the Wasserstein metric to be causal and then symmetrizing.
In a parallel paper \cite{AllTopologiesAreEqual} we show that all these topologies are in fact equal in the finite discrete time setting.

Already by looking at the pictures in Figure \ref{fig:Adap} one can see that all of these topologies will necessarily lack a feature which is often very useful -- namely the characterization of compactness by something akin to Prokhorov's theorem.
Let us imagine a sequence of laws of processes $\mu_n$ described by pictures similar to the one on the right, only with the size of the gap at time $1$ going to zero.
We had just decided that, if the topology is to respect the \enquote{information structure} of processes, then the sequence $(\mu_n)_n$ cannot converge to the measure $\mu$ described by the picture on the left.
If the topology is also finer than the weak topology (which is a feature that all of the cited topologies share) then $(\mu_n)_n$, and any of its subsequences have nowhere to converge to.
This is even though $\mu_n$ very much remain bounded in any of the usual senses, so by any fictitious generalization of Prokhorov's theorem to this new topology should be relatively compact.

One \enquote{fix} for this problem, which has already seen some use for example in \cite{BaBePa18,BaPa19}, is to pass to a larger space which (among others) contains an extra element which $(\mu_n)_n$ converges to.
But we are also interested in finding out what the (relatively) compact sets in the original space are.

We now give a rigorous definition of the information topology as introduced by Hellwig, as this is the formulation that it is easiest to work with for the purposes of this paper (see \cite{AllTopologiesAreEqual} for all the equivalent ways of describing this topology) and then state our main theorem, Theorem \ref{thm:relconested}, which gives a characterization of relatively compact sets in the information topology. We would like to emphasize the parallels between this theorem and the theorem of Arzelà-Ascoli describing compact sets in spaces of continuous functions.

Let $\Z$ be a Polish space. In fact, let us fix a compatible complete bounded metric, so that we are viewing $\Z$ as a Polish metric space with a bounded metric $\D_Z$. We are interested in probability measures on $\Z^N$, where $N \in \N$. We denote by $Z_t : \Z^N \rightarrow \Z$ the projection on the $t$-th coordinate, i.e.\ $(Z_t)_t$ is the canonical process on $\Z^N$.

\newcommand{\Law}{\mathcal L}
Building on the idea already alluded to that we want to capture what we may predict about the future evolution of a process from its behaviour up to the current time $t$ we introduce maps
\begin{align*}
  \I_t : \Pr {\Z^N} \rightarrow \Pr {\Z^t \times \Pr {\Z^{N-t}}}
\end{align*}
which send a measure $\mu$ to the joint law of
\begin{align*}
  Z_1, \dots, Z_t, \Law^{\mu}(Z_{t+1},\dots,Z_N | Z_1, \dots, Z_t)
\end{align*}
under $\mu$. $\Law^{\mu}(Z_{t+1},\dots,Z_N | Z_1, \dots, Z_t)$ denotes the conditional law of $Z_{t+1},\dots,Z_N$ given $Z_1, \dots, Z_t$ under $\mu$.

\begin{definition}
  Hellwig's \emph{information topology} on $\Pr {\Z^N}$ is the initial topology w.r.t.\ $\set{\I_t}[1 \leq t < N]$.
\end{definition}

In Definition \ref{def:mocprel} we introduce the central notion used in characterizing relative compactness in the information topology. First we need a little more notation.

For any Polish space $\X$ call $\Pr \X$ the set of probability measures and $\SubP \X$ the set of subprobability measures on $\X$.

\begin{definition}[Modulus of Continuity]
  \label{def:mocprel}
  Let $\X$ and $\Y$ be Polish metric spaces and let $\mu \in \Pr { \X \times \Y }$.
  The \emph{modulus of continuity} $\mocf \mu : \R_+ \rightarrow \R_+$ of $\mu$ is given by
  \begin{align*}
    \moc \mu \delta & := \sup_{\gamma \in \Sh \mu \delta} \D^\Y(\gamma)
  \end{align*}
  where
  \begin{align*}
    \D^\X(\gamma) & := {\textstyle \int} \D_\X(x_1,x_2) \d \gamma(x_1,y_1,x_2,y_2) \text{, } &
    \D^\Y(\gamma) & := {\textstyle \int} \D_\Y(y_1,y_2) \d \gamma(x_1,y_1,x_2,y_2)
  \end{align*}
  and
  \begin{multline*}
    \Sh \mu \delta := \set<\big>{ \gamma \in \SubP { \X \times \Y \times \X \times \Y } }[{ \\ \text{both $\X \times \Y$-marginals of $\gamma$ are $\leq \mu$} \text{ and } \D^\X(\gamma) \leq \delta }]
  \end{multline*}
  is the set of measures describing \enquote{perturbations} of $\mu$ that (on average) shift the $\X$-coordinate by at most $\delta$.
\end{definition}

\begin{remark}
  In the definition of $\Sh \mu \delta$ we might as well have said $\gamma \in \Pr { \X \times \Y \times \X \times \Y }$ instead of $\gamma \in \SubP { \X \times \Y \times \X \times \Y }$ without changing the definition of $\moc \mu \delta$, see Lemma \ref{lem:mocSubPvsPr}. For our purposes the definition given here is more convenient.
\end{remark}

Note that $\Law^{\mu}(Z_{t+1},\dots,Z_N | Z_1, \dots, Z_t)$, being a conditional law, is a function of $Z_1, \dots, Z_t$.
Setting $\X := \Z^t$ and $\Y := \Pr {\Z^{N-t}}$ we see that $\I_t(\mu)$ is probability on $\X \times \Y$ and is concentrated on the graph of a measurable function $\X \rightarrow \Y$.
$\X$ can be equipped with the $\ell^1$-metric $\D_\X((z_i)_i, (z'_i)_i) := \sum_{i=1}^t \D_\Z(z_i,z'_i)$, which is a bounded compatible complete metric and $\Y$ can be equipped with the $1$-Wasserstein metric built from the sum metric on $\Z^{N-t}$, which is a complete metric inducing the usual weak topology on $\Pr {\Z^{N-t}}$.
In the following, when we write $\mocf {\I_t(\mu)}$ this is how we want $\X$ and $\Y$ in the definition of the modulus of continuity to be understood.

\begin{theorem}
  \label{thm:relconested}
  $K \subseteq \Pr {\Z^N}$ is relatively compact in the information topology iff
  \begin{enumerate}
    \item \label{it:weaklycompact}
      \comment{If we want to adapt the whole paper to be both about weak topology and Wasserstein metrics for possibly unbounded metrics then the wording here needs to change:\\ }
      $K$ is relatively compact in the weak topology and
    \item \label{it:kmalgleichgradigstetig}
      $\displaystyle \lim_{\delta \searrow 0} \sup_{\mu \in K} \moc {\I_t(\mu)} \delta = 0$ for all $t \in \set{ 1, \dots, N-1 }$.
  \end{enumerate}
  \comment{The statement is true both for $\I_t = \disint {\Xb^t} {\Xb_{t+1}}$ and for $\I_t = \disint {\Xb^t} {\X_{t+1}} \circ \proj_{\Xb^{t+1}}$.}
\end{theorem}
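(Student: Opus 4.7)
My plan is to prove both implications, exploiting that the uniform modulus of continuity controls how limits of conditional laws can smear out across the first coordinate.

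For the necessity of (1) and (2), (1) is immediate: for any $f \in C_b(\Z^N)$, the map $\mu \mapsto \int f \, d\mu$ factors (by iterated conditional integration) through $\I_t$ as a weakly continuous functional, so the weak topology is coarser than the information topology, and relative compactness transfers. For (2), I argue by contradiction. Supposing it fails for some $t$, pick $\mu_n \in K$ and $\delta_n \searrow 0$ with $\moc{\I_t(\mu_n)}{\delta_n} \geq \epsilon > 0$, and witnesses $\gamma_n \in \Sh{\I_t(\mu_n)}{\delta_n}$ with $\D^\Y(\gamma_n) \geq \epsilon/2$. By relative compactness pass to a subsequence with $\mu_n \to \mu$ in the information topology, so in particular $\I_t(\mu_n) \to \I_t(\mu)$ weakly. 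The $\gamma_n$ are tight (their $\X \times \Y$-marginals are bounded above by the tight sequence $\I_t(\mu_n)$), so a sub-subsequence converges weakly to some $\gamma^*$. Testing the marginal inequalities against bounded continuous $f \geq 0$, and using that $\D_\X, \D_\Y$ are bounded continuous, the limit satisfies $\gamma^* \in \Sh{\I_t(\mu)}{0}$ with $\D^\Y(\gamma^*) \geq \epsilon/2$. But $\I_t(\mu)$ is concentrated on the graph of a measurable function, so any $\gamma \in \Sh{\I_t(\mu)}{0}$ is supported on $\{x_1 = x_2\}$ and hence on $\{y_1 = y_2\}$, forcing $\D^\Y(\gamma) = 0$, a contradiction.

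For the reverse direction, start with a sequence $(\mu_n) \subseteq K$ and use (1) to pass to a subsequence with $\mu_n \to \mu$ weakly. It suffices, for each $t \in \{1, \dots, N-1\}$, to extract a further subsequence with $\I_t(\mu_n) \to \I_t(\mu)$ weakly. I first establish tightness of $(\I_t(\mu_n))_n$ in $\Pr{\Z^t \times \Pr{\Z^{N-t}}}$: the $\Z^t$-marginal is tight from $\mu_n \to \mu$; for the $\Pr{\Z^{N-t}}$-marginal, tightness of $(Z_{t+1}, \dots, Z_N)_* \mu_n$ yields compacts $B_k \subseteq \Z^{N-t}$ with uniformly decaying tails, whence Markov's inequality shows that the random conditional $\kappa_n(z) := \Law^{\mu_n}(Z_{t+1}, \dots \mid Z_1, \dots, Z_t = z)$ lies in the compact set $\{\eta \in \Pr{\Z^{N-t}} : \eta(B_k^c) \leq 2^{-k}\ \forall k\}$ with probability close to $1$ under the $\Z^t$-marginal of $\mu_n$. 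Extract $\I_t(\mu_n) \to \tilde\nu_t$ weakly; it remains to identify $\tilde\nu_t = \I_t(\mu)$.

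Two observations pin down $\tilde\nu_t$. First, the averaging map $\eta \mapsto \int (\delta_z \otimes m)\,d\eta(z, m)$ from $\Pr{\Z^t \times \Pr{\Z^{N-t}}}$ to $\Pr{\Z^N}$ is weakly continuous and sends $\I_t(\mu_n) \mapsto \mu_n \to \mu$, so $\tilde\nu_t$ averages to $\mu$. Second, $\tilde\nu_t$ must be concentrated on a graph: otherwise its disintegration along the $\Z^t$-marginal has non-Dirac fibres on a set of positive mass, from which one pairs two distinct fibre values over the same $z$ to produce $\gamma^* \in \Sh{\tilde\nu_t}{0}$ with $\D^\Y(\gamma^*) > 0$; approximating $\gamma^*$ by $\gamma_n \in \Sh{\I_t(\mu_n)}{\delta_n}$ for some $\delta_n \searrow 0$---a stability statement for $\Sh{\cdot}{\cdot}$ under weak convergence of the reference measure---would then contradict (2). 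Once $\tilde\nu_t$ is on a graph, the averaging identity forces the graph to be the conditional law of $\mu$, so $\tilde\nu_t = \I_t(\mu)$. The main obstacle I anticipate is precisely the stability/approximation step above: producing $\gamma_n \in \Sh{\I_t(\mu_n)}{\delta_n}$ converging weakly to a prescribed $\gamma^* \in \Sh{\tilde\nu_t}{0}$. This is where assumption (2) is actually invoked, and handling the subprobability marginal inequalities when approximating from the limit requires care---presumably the non-algebraic machinery advertised in the abstract.
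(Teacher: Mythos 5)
Your overall architecture is sound, and your argument for the necessity of \itref{it:kmalgleichgradigstetig} is correct and is a genuinely softer route than the paper's: since the metrics in Theorem \ref{thm:relconested} are bounded, you can pass to a weak limit $\gamma^*$ of the witnesses $\gamma_n$, conclude $\D^\X(\gamma^*)=0$ and $\D^\Y(\gamma^*)\geq\epsilon/2$ by bounded continuity of $\D_\X,\D_\Y$, and derive the contradiction from the elementary fact that $\moc{\I_t(\mu)}{0}=0$ when $\I_t(\mu)$ sits on a graph. The paper instead deduces this direction from a quantitative continuity estimate for $\mu\mapsto\moc\mu\delta$; your version avoids that machinery (at the price of not extending to unbounded metrics, which is fine for the statement as posed). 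Your reduction of the sufficiency direction to tightness of $(\I_t(\mu_n))_n$ plus identification of the limit via the averaging map also matches the paper's skeleton (its Lemmata \ref{lem:undAndRelCo}--\ref{lem:compinPrPrX} and the closedness of $\I[\Pr{\Xb}]$).

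The problem is the step you yourself flag as the ``main obstacle'': showing that the weak limit $\tilde\nu_t$ of $\I_t(\mu_n)$ is concentrated on a graph. You reduce this to an unproven stability claim --- that a fixed $\gamma^*\in\Sh{\tilde\nu_t}{0}$ with $\D^\Y(\gamma^*)>0$ can be approximated by $\gamma_n\in\Sh{\I_t(\mu_n)}{\delta_n}$ with $\delta_n\searrow 0$ and $\D^\Y(\gamma_n)$ bounded below. This cannot be done by naive approximation, because the defining constraint of $\Sh{\I_t(\mu_n)}{\delta_n}$ is that \emph{both marginals of $\gamma_n$ be dominated by $\I_t(\mu_n)$}, and a measure with marginals $\leq\tilde\nu_t$ will in general fail to have marginals $\leq\I_t(\mu_n)$ for every $n$; weak closeness of the reference measures does not help with a pointwise domination requirement. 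This is precisely the content of the paper's Lemma \ref{lem:moccontinmu}: one takes a near-optimal coupling $\psi_n$ between $\tilde\nu_t$ and $\I_t(\mu_n)$, forms the composed perturbation $\psi_n^{-1}\mcmp\gamma^*\mcmp\psi_n$ (whose marginals are then automatically dominated by $\I_t(\mu_n)$), controls $\D^\X$ and $\D^\Y$ of the composition by the triangle inequality in $L^p$, and rescales to restore the $\delta$-constraint. Together with Lemma \ref{lem:FunPCharacterisation} (whose nontrivial half --- that concentration on a graph forces $\lim_{\delta\searrow 0}\moc\mu\delta=0$, via Lusin's theorem --- your argument also implicitly relies on for the converse identification) this is the technical heart of the theorem, and as it stands your proposal assumes it rather than proves it. Until you supply this coupling-composition argument, the sufficiency direction is incomplete.
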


\section{Properties of the Modulus of Continuity}

We will see in the proof of Theorem \ref{thm:relconested} that if we understand relative compactness in the information topology in the case of two timepoints, there's not much difficulty in passing to the $N$-timepoint case. So we will first focus on the two-timepoint case.
Here the information topology is the toplogy that we get on $\Pr {\Z^2}$ when we embed it into $\Pr {\Z \times \Pr \Z}$ via $\I_1$.
In fact $\Pr {\Z^2}$ with the information topology is homeomorphic to the subspace of $\Pr {\Z \times \Pr \Z}$ whose elements are all probability measures which are concentrated on the graph of a Borel function $\Z \rightarrow \Pr \Z$, equipped with the subspace topology.

So this is the setting in which we will begin studying the problem. We have two Polish metric spaces $\X$ and $\Y$ and we are interested in the relatively compact sets in $\FunP \X \Y \subseteq \Pr {\X \times \Y}$, the space of measures on $\X \times \Y$ which are concentrated on the graph of some Borel function from $\X$ to $\Y$.

\subsection{From \texorpdfstring{$1$}{1}-Wasserstein to \texorpdfstring{$p$}{p}-Wasserstein}
\label{sec:wassersteinp}

At this point we would like to clarify a small detail that we have tried to mostly gloss over up to now.
In the introduction we have been switching between talking about topological spaces and talking about metric spaces.
This was for expositional purposes, because we wanted to show how our results connect to the literature on \enquote{adapted weak topologies}, more specifically the information topology, which has only been defined as a topology -- not a metric -- by Hellwig.
As can be seen from Definition \ref{def:mocprel} of the modulus of continuity, our methods make direct use of a metric.
By choosing a compatible complete bounded metric on $\Z$ (and $\Z^{N-t}$) we get the $1$-Wasserstein metric (or really any $p$-Wasserstein metric) to induce the usual weak topology on $\Pr {\Z^{N-t}}$ and are thus able to recover topological results about the weak topology and the information topology.

\newcommand{\Probp}{\mathscr P_{\!\!p}}
\RenewDocumentCommand{\Pr}{d<>m}{\Probp\IfValueTF{#1}{#1}{\left}(#2\IfValueTF{#1}{#1}{\right})}
\NewDocumentCommand{\PrN}{d<>m}{\Prob\IfValueTF{#1}{#1}{\left}(#2\IfValueTF{#1}{#1}{\right})}
The methods themselves do not rely on the assumption that the metrics are bounded, though.
They work for any Polish metric space and provide statements about compact sets in the topology induced by the $1$-Wasserstein distance.
In fact, they are also easily generalized to $p$-Wasserstein distances for $p \geq 1$.

Therefore, in the sequel let us make the following conventions, which we will be using unless noted otherwise.
$1 \leq p < \infty$ can be chosen now and is kept fixed throughout the paper.
All spaces $\X, \Y, \Z$ etc.\ denoted by calligraphic\comment{check if still true in the end} letters are Polish metric spaces. The metric on $\X$ will be called $\D_{\X}$, etc. If clear from the context we may omit the subscript. For any two Polish metric spaces $\X$ and $\Y$ their product space $\X \times \Y$ will be regarded as a Polish metric space with the metric
\begin{align*}
  \D_{\X \times \Y}\pa<\big>{(x_1,y_1),(x_2,y_2)} := \pa<\big>{ \D_\X(x_1,x_2)^p + \D_\Y(y_1,y_2)^p }^{\frac 1 p} \fullstop
\end{align*}
Note that this construction is associative so that there is no confusion about what the metric on for example $\X \times \Y \times \Z$ should be, as both groupings $(\X \times \Y) \times \Z$ and $\X \times (\Y \times \Z)$ give the same result. So for example the metric on $\Z^t$ is 
\begin{align*}
  \D_{\Z^t}\pa<\big>{(z_i)_i, (z'_i)_i} = \pa<\big>{ {\textstyle \sum_i} \, \D_\Z(z_i,z'_i)^p }^{\frac 1 p} \fullstop
\end{align*}
For any Polish metric space $\X$, $\Pr \X$ will denote the space of probability measures $\mu$ on $\X$ with finite $p$-th moment, i.e.\ satisfying
\begin{align*}
  \int \D_\X(x_0,x)^p \d\mu(x) < \infty
\end{align*}
for any (and therefore all) $x_0$ and will carry the $p$-Wasserstein metric
\begin{align*}
  \D_{\Pr \X} (\mu_1, \mu_2) := \Wa(\mu_1,\mu_2) = \pa{\inf_{\gamma \in \Cpl {\mu_1} {\mu_2}} \int \D_\X(x_1,x_2)^p \d\gamma(x_1,x_2)}^{\frac 1 p}
\end{align*}
where $\Cpl {\mu_1} {\mu_2}$ is the set of couplings between $\mu_1$ and $\mu_2$, i.e.\ the set of measures $\gamma \in \PrN {\X \times \X}$ with first marginal $\mu_1$ and second marginal $\mu_2$.

\RenewDocumentCommand{\FunP}{d<>mm}{\mathscr F_{\!p} \IfValueTF{#1}{#1}{\left}( #2 \rightsquigarrow #3 \IfValueTF{#1}{#1}{\right})}
$\FunP \X \Y$ is the space of $\mu \in \Pr {\X \times \Y}$ which are concentrated on the graph of some Borel function from $\X$ to $\Y$.


We also amend Definition \ref{def:mocprel}.

\begin{definition}[$p$-Modulus of Continuity]
  \label{def:moc}
  Let $\X$ and $\Y$ be Polish metric spaces and let $\mu \in \Pr { \X \times \Y }$.
  The \emph{modulus of continuity} $\mocf \mu : \R_+ \rightarrow \R_+$ of $\mu$ is given by
  \begin{align*}
    \moc \mu \delta & := \sup_{\gamma \in \Sh \mu \delta} \D^\Y(\gamma)
  \end{align*}
  where
  \begin{align*}
    \D^\X(\gamma) & := \pa<\Big>{{\textstyle \int} \D_\X(x_1,x_2)^p \d \gamma(x_1,y_1,x_2,y_2)}^{\frac 1 p} \text{,}\\
    \D^\Y(\gamma) & := \pa<\Big>{{\textstyle \int} \D_\Y(y_1,y_2)^p \d \gamma(x_1,y_1,x_2,y_2)}^{\frac 1 p}
  \end{align*}
  and
  \begin{multline*}
    \Sh \mu \delta := \set<\big>{ \gamma \in \SubP { \X \times \Y \times \X \times \Y } }[{ \\ \text{both $\X \times \Y$-marginals of $\gamma$ are $\leq \mu$} \text{ and } \D^\X(\gamma) \leq \delta }]
  \end{multline*}
\end{definition}

\begin{remark}
  \label{rem:DXprop}
  There are two main properties of $\D^\X$ and $\D^\Y$ that we will be making use of in our proofs.
  The first is that for $r \geq 0$
  \begin{align}
    \label{eq:DXhomog}
    \D^\X(r \gamma) = r^{1/p} \, \D^\X(\gamma) \fullstop
  \end{align}
  The second is that $\D^\X(\gamma)$ is really the $L^p(\gamma)$-norm of $(x_1,y_1,x_2,y_2) \mapsto \D_\X(x_1,x_2)$. If we can decompose this function as a sum of functions or bound it by a sum of function then we may apply the triangle inequality of $L^p(\gamma)$.
\end{remark}

\subsection{Basic properties of the modulus of continuity}

Now we start listing basic properties of $\moc \mu \delta$.

First we show that in the definition of $\moc \mu \delta$ it does not matter whether we talk about probabilities or subprobabilities.

\begin{definition}
  Let $\gamma \in \SubP {\X \times \Y \times \X \times \Y}$. The mirrored version, or \emph{inverse}, $\gamma^{-1}$ of $\gamma$ is the pushforward of $\gamma$ under the map $(x_1,y_1,x_2,y_2) \mapsto (x_2,y_2,x_1,y_1)$.
\end{definition}

\begin{lemma}
  \label{lem:mocSubPvsPr}
  Let $\mu \in \Pr {\X \times \Y}$.
  For any $\gamma' \in \SubP {\X \times \Y \times \X \times \Y}$, both of whose $\X \times \Y$-marginals are $\leq \mu$, there is a $\gamma \in \PrN {\X \times \Y \times \X \times \Y}$ both of whose $\X \times \Y$-marginals are equal to $\mu$, which satisfies $\D^\X(\gamma) = \D^\X(\gamma')$, $\D^\Y(\gamma) = \D^\Y(\gamma')$ and which is symmetric in the sense that $\gamma = \gamma^{-1}$.
\end{lemma}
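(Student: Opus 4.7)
The plan is a clean two-step construction: first symmetrize $\gamma'$, then pad it on the diagonal to promote both marginals from $\leq \mu$ to $= \mu$ (and hence to turn the result into a probability).

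For the symmetrization, I would set $\gamma_s := \tfrac{1}{2}(\gamma' + (\gamma')^{-1})$. Denoting by $\mu_1, \mu_2 \leq \mu$ the two $\X \times \Y$-marginals of $\gamma'$, the two marginals of $(\gamma')^{-1}$ are $\mu_2, \mu_1$, so both marginals of $\gamma_s$ coincide and equal $\tfrac{1}{2}(\mu_1 + \mu_2)$; in particular $\gamma_s$ is symmetric and its marginals are still $\leq \mu$. Because the integrands $\D_\X(x_1,x_2)^p$ and $\D_\Y(y_1,y_2)^p$ are invariant under the swap $(x_1,y_1,x_2,y_2) \mapsto (x_2,y_2,x_1,y_1)$, integrating them against $(\gamma')^{-1}$ yields the same value as integrating them against $\gamma'$. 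Consequently $\D^\X(\gamma_s) = \D^\X(\gamma')$ and $\D^\Y(\gamma_s) = \D^\Y(\gamma')$.

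Next, let $\nu := \mu - \tfrac{1}{2}(\mu_1 + \mu_2)$, which is a nonnegative finite measure on $\X \times \Y$ because $\mu_1, \mu_2 \leq \mu$, and let $\tilde\gamma$ be the pushforward of $\nu$ under the diagonal embedding $(x,y) \mapsto (x,y,x,y)$. Both $\X \times \Y$-marginals of $\tilde\gamma$ equal $\nu$, so $\gamma := \gamma_s + \tilde\gamma$ has both marginals equal to $\mu$, hence total mass $1$; moreover $\tilde\gamma$ is symmetric so $\gamma$ remains symmetric. Since $\D_\X$ and $\D_\Y$ vanish on the diagonal, the $\tilde\gamma$ term contributes $0$ to each of the two integrals, and by the additivity of the $p$-th power of the $L^p$-norm under disjoint decomposition noted in Remark \ref{rem:DXprop} we get $\D^\X(\gamma)^p = \D^\X(\gamma_s)^p = \D^\X(\gamma')^p$, and likewise for $\D^\Y$.

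There is no serious obstacle here; the argument is really a bookkeeping exercise resting on two observations, both highlighted in Remark \ref{rem:DXprop}: that $\D^\X$ and $\D^\Y$ are genuine $L^p$-quantities (so additive in their $p$-th power when summing measures whose supports can be treated separately), and that $\D_\X, \D_\Y$ are symmetric and vanish on the diagonal. The only points that require attention are that $\nu \geq 0$ (ensured by $\mu_1, \mu_2 \leq \mu$) and that the diagonal pushforward $\tilde\gamma$ is measurable and well-defined on $\X \times \Y \times \X \times \Y$, which it is because the diagonal map is continuous.
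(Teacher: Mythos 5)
Your proof is correct and follows essentially the same route as the paper's: symmetrize via $\tfrac12(\gamma' + (\gamma')^{-1})$, then add the diagonal (identity) coupling of the deficit $\mu - \mu'$ to restore both marginals to $\mu$. The verification that $\D^\X$ and $\D^\Y$ are unchanged (symmetry of the metrics, vanishing on the diagonal) matches the paper's argument.
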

\begin{proof}
  Given $\gamma' \in \SubP {\X \times \Y \times \X \times \Y}$ we first symmetrize by setting $\gamma_2 := \frac 1 2 \pa{\gamma' + \gamma'^{-1}}$. Because metrics are symmetric, $\D^\X(\gamma_2) = \D^\X(\gamma')$ and $\D^\Y(\gamma_2) = \D^\Y(\gamma')$. Now both the first and the second $\X \times \Y$-marginal of $\gamma_2$ is equal to some measure $\mu' \leq \mu$. If we add the identity coupling of $\mu-\mu'$, i.e.\ the measure $\push{(x,y)\mapsto (x,y,x,y)}\pa{\mu - \mu'}$, to the measure $\gamma_2$ we get a measure $\gamma$ which is still symmetric, still satisfies $\D^\X(\gamma) = \D^\X(\gamma')$, $\D^\Y(\gamma) = \D^\Y(\gamma')$ and which has both marginals equal to $\mu' + (\mu - \mu') = \mu$ and therefore must be a probability measure.
\end{proof}

\begin{lemma}
  $\mocf \mu$ is monotone, i.e.\ $\delta_1 \leq \delta_2$ implies $\moc \mu {\delta_1} \leq \moc \mu {\delta_2}$.
\end{lemma}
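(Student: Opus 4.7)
The claim is essentially immediate from the definition. The plan is to observe that the defining set $\Sh \mu \delta$ of admissible perturbations is monotone in $\delta$: since the only $\delta$-dependent constraint in the definition is $\D^\X(\gamma) \leq \delta$, any $\gamma$ satisfying $\D^\X(\gamma) \leq \delta_1$ automatically satisfies $\D^\X(\gamma) \leq \delta_2$ whenever $\delta_1 \leq \delta_2$. Hence $\Sh \mu {\delta_1} \subseteq \Sh \mu {\delta_2}$.

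Taking the supremum of $\D^\Y(\gamma)$ over a larger set can only increase the value, so
\begin{align*}
  \moc \mu {\delta_1} = \sup_{\gamma \in \Sh \mu {\delta_1}} \D^\Y(\gamma) \leq \sup_{\gamma \in \Sh \mu {\delta_2}} \D^\Y(\gamma) = \moc \mu {\delta_2} \fullstop
\end{align*}
There is no real obstacle here; the statement is a direct consequence of the monotonicity of suprema under set inclusion. I expect the author's proof to be no longer than a sentence or two.
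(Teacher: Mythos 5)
Your argument is correct and is exactly the reasoning the paper leaves implicit: its entire proof reads ``Obvious.'' The observation that $\Sh \mu {\delta_1} \subseteq \Sh \mu {\delta_2}$ and that suprema are monotone under set inclusion is precisely what makes it so.
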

\begin{proof}
  Obvious.
\end{proof}

\begin{lemma}
  $\restr{\mocf \mu}{(0,\infty)}$ is continuous.
\end{lemma}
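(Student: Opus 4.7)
The plan is to exploit the homogeneity relation $\D^\X(r\gamma) = r^{1/p}\D^\X(\gamma)$ (and likewise $\D^\Y(r\gamma) = r^{1/p}\D^\Y(\gamma)$) noted in Remark \ref{rem:DXprop} in order to bound $\mocf \mu$ from above and below by rescaled copies of itself, and then squeeze.

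Concretely, given $0 < \delta \leq \delta'$ and any $\gamma \in \Sh{\mu}{\delta'}$, I consider $c\gamma$ with $c := (\delta/\delta')^p \in (0,1]$. Its $\X \times \Y$-marginals are $c$ times those of $\gamma$, hence still $\leq \mu$; its $\D^\X$-cost equals $(\delta/\delta')\,\D^\X(\gamma) \leq \delta$, so $c\gamma \in \Sh{\mu}{\delta}$; and its $\D^\Y$-cost equals $(\delta/\delta')\,\D^\Y(\gamma)$. Taking the supremum over $\gamma \in \Sh{\mu}{\delta'}$ and combining with the monotonicity lemma above yields, for all $0 < \delta \leq \delta'$,
\begin{align*}
  \frac{\delta}{\delta'}\,\moc \mu {\delta'} \;\leq\; \moc \mu \delta \;\leq\; \moc \mu {\delta'} \fullstop
\end{align*}
Letting $\delta' \searrow \delta$ gives right-continuity at $\delta$, and letting $\delta \nearrow \delta'$ gives left-continuity at $\delta'$, so continuity on $(0,\infty)$ follows.

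The one caveat is that the squeeze needs $\moc \mu {\delta'}$ to be finite; otherwise $(\delta/\delta')\,\moc \mu {\delta'}$ need not approach $\moc \mu {\delta'}$. This is where I would invoke the $p$-th moment condition on $\mu$: for any $\gamma$ with both $\X \times \Y$-marginals $\leq \mu$, the $L^p(\gamma)$-triangle inequality applied to $\D_\Y(y_1,y_2) \leq \D_\Y(y_1,y_0) + \D_\Y(y_0,y_2)$ (for any fixed $y_0 \in \Y$) gives
\begin{align*}
  \D^\Y(\gamma) \;\leq\; 2\pa<\big>{{\textstyle \int} \D_\Y(y_0,y)^p \d\mu(x,y)}^{1/p} \;<\; \infty \comma
\end{align*}
with a bound independent of $\gamma$ and of $\delta$. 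There is no serious obstacle beyond keeping track of exponents in the scaling; in particular no compactness or tightness argument is needed, and the proof is independent of the structure of $\mu$ being concentrated on a graph.
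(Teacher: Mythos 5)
Your proof is correct and follows essentially the same route as the paper: the scaling inequality $\moc \mu {\delta} \geq \frac{\delta}{\delta'}\moc \mu {\delta'}$ obtained from the homogeneity $\D^\X(r\gamma)=r^{1/p}\D^\X(\gamma)$, combined with monotonicity, is exactly the paper's argument. Your explicit verification that $\moc \mu {\delta'}<\infty$ via the $p$-th moment of $\mu$ is a point the paper leaves implicit, and it is a welcome addition.
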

\begin{proof}
  Let $0 < \delta_1 < \delta_2$. Let $\gamma \in \Sh \mu {\delta_2}$.

  By \eqref{eq:DXhomog} we have $r \gamma \in \Sh \mu {\delta_1}$, if we set $r := \pa{\frac {\delta_1} {\delta_2}}^p$.
  So $\moc \mu {\delta_1} \geq \D^\Y(r \gamma) = \frac {\delta_1} {\delta_2} \D^\Y(\gamma)$.
  As $\gamma \in \Sh \mu {\delta_2}$ was arbitrary we have
  \begin{align}
    \label{eq:moccontindelta}
    \moc \mu {\delta_1} \geq \frac {\delta_1} {\delta_2} \moc \mu {\delta_2} \fullstop
  \end{align}
  Let $\delta > 0$, let $|\delta' - \delta| < \varepsilon'$ where $\varepsilon'$ is small enough that both 
  \begin{align*}
    \pa{ 1 - \frac {\delta - \varepsilon'} \delta } \moc \mu \delta & < \varepsilon &
    \pa{ \frac {\delta + \varepsilon'} \delta - 1} \moc \mu \delta & < \varepsilon \fullstop
  \end{align*}
  If $\delta' < \delta$ then subtracting \eqref{eq:moccontindelta} with $\delta_2 = \delta$, $\delta_1 = \delta'$ from $\moc \mu \delta$ we get
  \begin{align*}
    | \moc \mu \delta - \moc \mu {\delta'} | = \moc \mu \delta - \moc \mu {\delta'} \leq \pa{1 - \frac {\delta'} \delta} \moc \mu \delta
  \end{align*}
  If $\delta < \delta'$ then similarly multiplying \eqref{eq:moccontindelta} by $\frac {\delta_2} {\delta_1}$, substituting $\delta_2 = \delta'$, $\delta_1 = \delta$  and subtracting $\moc \mu \delta$ from it we get
  \begin{align*}
    | \moc \mu \delta - \moc \mu {\delta'} | = \moc \mu {\delta'} - \moc \mu \delta \leq \pa{ \frac {\delta'} \delta - 1 } \moc \mu \delta \fullstop
  \end{align*}
\end{proof}

The following lemma shows how the analogy hinted at by calling $\mocf \mu$ the modulus of continuity is to be understood. While the classical modulus of continuity recognizes \emph{continuous functions} $f$ as those for which $\lim_{\delta \searrow 0} \moc f \delta = 0$, our modulus of continuity for measures recognizes \emph{measures concentrated on the graph of a function}.

\begin{lemma}
  \label{lem:FunPCharacterisation}
  Let $\mu \in \Pr { \X \times \Y }$. Then $\mu \in \FunP \X \Y$ iff $\lim_{\delta \searrow 0} \moc \mu \delta = 0$.
\end{lemma}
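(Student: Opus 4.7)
The easier direction is the contrapositive of ``$\Leftarrow$''. Assuming $\mu \notin \FunP \X \Y$, I disintegrate $\mu$ along its $\X$-marginal $\mu_\X$ as $\mu = \int \delta_x \otimes \mu_x \d\mu_\X(x)$. By hypothesis the set $S := \set{x \in \X}[\mu_x \text{ is not a Dirac mass}]$ has positive $\mu_\X$-measure, and on $S$ one has $\int\int \D_\Y(y,y')^p \d\mu_x(y) \d\mu_x(y') > 0$. I define $\gamma$ as the joint law of $(X,Y,X,Y')$ where $X \sim \mu_\X$ and, given $X=x$, the pair $(Y,Y')$ has distribution $\mu_x \otimes \mu_x$. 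Both $\X \times \Y$-marginals of $\gamma$ equal $\mu$ and $\D^\X(\gamma) = 0$, so $\gamma \in \Sh \mu \delta$ for every $\delta \geq 0$, while $\D^\Y(\gamma)^p \geq \int_S \int\int \D_\Y(y,y')^p \d\mu_x(y) \d\mu_x(y') \d\mu_\X(x) =: c > 0$. Thus $\moc \mu \delta \geq c^{1/p}$ for every $\delta$, and the limit cannot be zero.

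For the forward direction, I write $\mu$ as the pushforward of $\mu_\X$ under $x \mapsto (x, f(x))$ for some Borel $f : \X \rightarrow \Y$, fix $\gamma \in \Sh \mu \delta$, and use that both $\X \times \Y$-marginals of $\gamma$ are $\leq \mu$ to get $y_i = f(x_i)$ $\gamma$-almost surely for $i = 1,2$, so that $\D^\Y(\gamma)^p = \int \D_\Y(f(x_1), f(x_2))^p \d\gamma$. Fix $y_0 \in \Y$. Finiteness of $\int \D_\Y(f(x), y_0)^p \d\mu_\X(x)$, combined with absolute continuity of the integral and Lusin's theorem, will let me choose, for any $\eta > 0$, a compact $K \subseteq \X$ with $\mu_\X(\X \setminus K) < \eta$, $\int_{\{x \notin K\}} \D_\Y(y, y_0)^p \d\mu(x,y) < \eta$, and $f|_K$ continuous -- hence uniformly continuous on $K$ with some modulus $\omega_K$, and bounded, $M := \sup_{x \in K} \D_\Y(f(x), y_0) < \infty$. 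Applying the $L^p(\gamma)$-triangle inequality, I split $\D_\Y(y_1, y_2)$ across the sets $A := \set{(x_1,y_1,x_2,y_2)}[x_1, x_2 \in K]$ and $B := A^c$. On $A$, the uniform continuity bound $\D_\Y(f(x_1), f(x_2)) \leq \omega_K(\D_\X(x_1, x_2))$, combined with a choice of $t_0 > 0$ satisfying $\omega_K(t_0) \leq \eta$ and Markov's inequality applied to $\D_\X(x_1,x_2)$ against $\gamma$ (using $\D^\X(\gamma) \leq \delta$), gives $\|\D_\Y(y_1, y_2) \indicator{A}\|_{L^p(\gamma)} \leq \eta + 2M \delta / t_0$.

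On $B$ I first bound $\D_\Y(y_1, y_2) \leq \D_\Y(y_1, y_0) + \D_\Y(y_2, y_0)$ and symmetrize $\gamma$ via Lemma \ref{lem:mocSubPvsPr}, reducing the task to bounding $\|\D_\Y(y_1, y_0) \indicator{B}\|_{L^p(\gamma)}$. Using $\indicator{B} \leq \indicator{\{x_1 \notin K\}} + \indicator{\{x_2 \notin K\}}$, this in turn reduces to the two integrals $I_1 := \int \D_\Y(y_1, y_0)^p \indicator{\{x_1 \notin K\}} \d\gamma$ and $I_2 := \int \D_\Y(y_1, y_0)^p \indicator{\{x_2 \notin K\}} \d\gamma$. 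The integral $I_1$ is directly $\leq \int_{\{x \notin K\}} \D_\Y(y, y_0)^p \d\mu < \eta$ via the first-marginal constraint. The main technical obstacle is $I_2$: its integrand lives on the first coordinate block while the cutoff lives on the second, and the first-marginal constraint alone is not enough. I would resolve this by truncation: for $R > 0$, decompose $\D_\Y(y_1, y_0)^p = (\D_\Y(y_1, y_0)^p \wedge R) + (\D_\Y(y_1, y_0)^p - R)_+$; the bounded piece contributes $\leq R \cdot \gamma(\{x_2 \notin K\}) \leq R \mu_\X(\X \setminus K) \leq R \eta$ by the \emph{second}-marginal constraint, and the remainder $\leq \int (\D_\Y(y, y_0)^p - R)_+ \d\mu$ by the first-marginal constraint, which tends to $0$ as $R \to \infty$ by finiteness of the $p$-th moment. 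Given $\epsilon > 0$, I would first choose $R$ large, then $\eta$ small (which fixes $K, M, \omega_K, t_0$), and finally $\delta$ small enough to make the total bound $\leq \epsilon$, completing the proof.
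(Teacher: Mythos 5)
Your proof is correct, and its overall skeleton matches the paper's: the backward direction uses the very same conditionally independent coupling $\gamma \in \Sh \mu 0$ built from a disintegration of $\mu$ (you phrase it as a contrapositive, the paper argues directly, but the content is identical), and the forward direction rests on Lusin's theorem, uniform continuity of $f$ on a compact $K$, and absolute continuity of the measure with density $\D_\Y(y_0,y)^p$ with respect to $\mu$. The one place where you genuinely diverge is the step you correctly single out as the main obstacle: the cross-block term $\int \D_\Y(y_1,y_0)^p \indicator{\{x_2 \notin K\}} \d\gamma$, where the integrand lives on the first coordinate block but the cutoff on the second. You resolve it by truncating $\D_\Y(y_1,y_0)^p$ at level $R$, charging the bounded part to the second-marginal constraint and the tail to the first-marginal constraint plus integrability, with the order of quantifiers $R$, then $\eta$, then $\delta$. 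The paper instead avoids the issue entirely by making the absolute-continuity estimate \eqref{eq:abscont} uniform over all $[0,1]$-valued densities $g$ with $\mu(g) < \theta$: disintegrating the weight $R(x_1,x_2)$ with respect to \emph{either} marginal of $\gamma$ produces such a $g$, so both $\int \D_\Y(y_0,y_1)^p R \d\gamma$ and $\int \D_\Y(y_0,y_2)^p R \d\gamma$ are handled symmetrically in one stroke, with no truncation parameter and one fewer layer in the choice of constants. (The paper also folds the event $\{\D_\X(x_1,x_2) \geq \eta\} \cap (K \times K)$ into the same weight $R$, whereas you treat it separately via Markov and the bound $2M$ on $K$; both work.) Your truncation argument is a perfectly sound, if slightly longer, substitute, and it has the minor virtue of making explicit exactly which marginal constraint pays for which piece.
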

\begin{proof}
  \comment{I can't immediately figure out if $\mocf \mu$ has to be continuous at $0$ when $\mu \not\in \FunP \X \Y$... My feeling says that maybe somehow applying this result to the disintegration would show that yes, but also maybe not...}

  By monotonicity of $\mocf \mu$, $\lim_{\delta \searrow 0} \moc \mu \delta = 0$ implies $\moc \mu 0 = 0$. We first show that this in turn implies $\mu \in \FunP \X \Y$.
  For any $\mu \in \Pr { \X \times \Y } $ we can always construct the following $\gamma \in \Sh \mu 0 \subseteq \PrN { \X \times \Y \times \X \times \Y} $.
  Let $(\mu_x)_{x \in \X}$ be a disintegration of $\mu$ w.r.t.\ the first coordinate.
  \begin{align*}
    \gamma(f) & := \iiint f(x,y_1,x,y_2) \d \mu_x(y_2) \d \mu_x(y_1) \d \mu(x,\unused y) \\ &\phantom{:}=  \iint f(x,y_1,x,y_2) \d \pa{ \mu_x \otimes \mu_x }(y_1,y_2) \d \mu(x,\unused y)
  \end{align*}
  $\moc \mu 0 = 0$ implies that
  \begin{align*}
    0 = \D^\Y(\gamma)^p = \iint \D_\Y(y_1,y_2)^p \d \pa{ \mu_x \otimes \mu_x }(y_1,y_2) \d \mu(x,\unused y) \fullstop
  \end{align*}
  This means that for $\marg \mu \X$-a.a.\ $x$ we have $\int \D_\Y(y_1,y_2)^p \d \pa{ \mu_x \otimes \mu_x }(y_1,y_2) = 0$. This implies that $\mu_x$ is concentrated on a single point, and there is a measurable map $b$ sending measures concentrated on a single point to that point. $b \circ (x \mapsto \mu_x)$ is then the function on whose graph $\mu$ is concentrated. This concludes the first half of the proof.

  We now show that $\mu \in \FunP \X \Y$ implies $\lim_{\delta \searrow 0} \moc \mu \delta = 0$.
  Let $f: \X \rightarrow \Y$ be a measurable function such that $\int g(x,y) \d \mu(x,y) = \int g(x,f(x)) \d \mu(x, y)$.

  Fix $y_0 \in \Y$, let $\theta > 0$ be such that $g : \X \times \Y \rightarrow [0,1]$, $\mu(g) < \theta$ implies 
  \begin{align}
    \label{eq:abscont}
    \int \D(y_0,y)^p g(x,y) \d \mu(x,y) < \epsilon^{p} \fullstop
  \end{align}
  This is possible because the finite measure which has density $(x,y) \mapsto \D(y_0,y)^p$ w.r.t.\ $\mu$ is absolutely continuous w.r.t.\ to $\mu$.

  Because $\X$ is Polish and $\Y$ is second countable we can apply Lusin's theorem to get a compact set $K \subseteq \X$ such that $\restr f K$ is uniformly continuous and $\marg \mu \X (K^C) < \frac \theta 3$.
  Let $\eta > 0$ be such that for $x_1, x_2 \in K$, $\D(x_1,x_2) < \eta$ implies $\D(f(x_1),f(x_2)) < \epsilon$.
  Let $\delta < (\frac \theta 3)^{1/p} \cdot \eta$.

  Let $\gamma \in \Sh \mu \delta$. $\D^\Y(\gamma)$ is the $L^p(\gamma)$-norm of the function $(x_1,y_1,x_2,y_2) \mapsto \D(y_1,y_2)$ which, setting
  \begin{align*}
    R(x_1,x_2) & := \indicator{(K \times K)^C}(x_1,x_2) + \indicator{K \times K}(x_1,x_2) \, \indicator{\lcro{\eta,\infty}}(\D(x_1,x_2))
  \end{align*}
  we may bound as follows
  \begin{multline*}
    \D(y_1,y_2) = \D(y_1,y_2) \, R(x_1,x_2) + \D(y_1,y_2) \, \indicator{K \times K}(x_1,x_2) \, \indicator{\lcro{0,\eta}}(\D(x_1,x_2))  \\
    \leq \pa<\big>{\D(y_1,y_0) + \D(y_0,y_2)} \, R(x_1,x_2) 
    + \D(y_1,y_2) \, \indicator{K \times K}(x_1,x_2) \, \indicator{\lcro{0,\eta}}(\D(x_1,x_2))
  \end{multline*}

  Using the triangle inequality in $L^p(\gamma)$ and the fact $\mu$ is concentrated on the graph of $f$ we get that
  \begin{multline*}
    \D^\Y(\gamma) \leq 
    \pa<\bigg>{\int \D(y_0,y_1)^p R(x_1,x_2) \d\gamma(x_1,y_1,x_2,y_2)}^{1/p} + \\
    \pa<\bigg>{\int \D(y_0,y_2)^p R(x_1,x_2) \d\gamma(x_1,y_1,x_2,y_2)}^{1/p} + \\
    \pa<\bigg>{\int \D(f(x_1),f(x_2))^p \, \indicator{K \times K}(x_1,x_2) \, \indicator{\lcro{0,\eta}}(\D(x_1,x_2))\d \gamma(x_1,y_1,x_2,y_2) }^{1/p}
  \end{multline*}

  The first two integrals are of the form as in \eqref{eq:abscont} and as
  \begin{align*}
    \theta > \int R(x_1,x_2) \d\gamma(x_1,y_1,x_2,y_2) = \iint R(x_1,x_2) \d\gamma_{x_1,y_1}(x_2,y_2) \d\mu(x_1,y_1)
  \end{align*}
  by the choice of $K$ and because $\gamma \in \Sh \mu \delta$, they can each be bounded by $\epsilon^p$. In the last integral, whenever the integrand is nonzero, $\D(f(x_1),f(x_2)) < \epsilon$ by our choice of $K$ and $\eta$.
  Overall we get
  \begin{align*}
    \D^\Y(\gamma) < 3 \epsilon \fullstop
  \end{align*}
\end{proof}

\subsection{Composition of measures}

In the proof of Lemma \ref{lem:moccontinmu} below we will be \enquote{composing} measures on product spaces to get new measures. A useful intuition may be to think of the operation $\mcmp$ below as a generalization of the composition of functions or relations. From a probabilistic point of view $\gamma \dprod \gamma'$ below should be called the conditionally independent product (at least when both $\gamma$ and $\gamma'$ are probability measures).
\begin{definition}
  \label{def:mcmp}
  For $\gamma \in \Pr {\X \times \Y}$ and $\lambda \in \Pr {\Y \times \Z}$ with $\marg \gamma \Y = \marg {\lambda} \Y$ define
\begin{align*}
  \gamma \dprod \lambda & := f \mapsto \int f(x,y,z) \d \lambda_y(z) \d \gamma(x,y) \\
  \gamma \mcmp \lambda & := f \mapsto \int f(x,z) \d \lambda_y(z) \d \gamma(x,y)
\end{align*}
where $y \mapsto \lambda_y$ is a disintegration of $\lambda$ w.r.t.\ the first variable, that is $\int f \d\lambda = \int \int f(y,z) \d \lambda_y(z) \d \gamma(y,\unused z)$.
\end{definition}
The asymmetry in the definition is only apparent, in the sense that we may as well have disintegrated $\gamma$ instead of $\lambda$, getting the same result.
Both $\dprod$ and $\mcmp$ are associative operations.

\begin{lemma}
  \label{lem:moccontinmu}
  Let $\delta > 0$. Then
  \begin{align*}
    \mu \mapsto \moc \mu \delta 
  \end{align*}
  is continuous on $\Pr{ \X \times \Y}$, i.e.\ in the $p$-Wasserstein metric.
\end{lemma}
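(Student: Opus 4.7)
The plan is to prove continuity by establishing both $\liminf_n \moc{\mu_n}{\delta} \geq \moc{\mu}{\delta}$ and $\limsup_n \moc{\mu_n}{\delta} \leq \moc{\mu}{\delta}$ whenever $\mu_n \to \mu$ in $\Wa$. The key idea is to \enquote{conjugate} any perturbation $\gamma$ of $\mu$ by an almost-optimal coupling $\pi_n \in \Cpl{\mu_n}{\mu}$ using the operation $\dprod$ of Definition~\ref{def:mcmp}, producing an approximately corresponding perturbation of $\mu_n$ whose $\D^\X$- and $\D^\Y$-costs differ from the original ones by at most $2\W{\mu_n}{\mu}$; a final rescaling via \eqref{eq:DXhomog} then brings the construction back into the correct $\Sh{\cdot}{\delta}$ class.

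For the lower bound I would fix any $\gamma \in \Sh{\mu}{\delta}$ and, using Lemma~\ref{lem:mocSubPvsPr}, assume both $\X \times \Y$-marginals of $\gamma$ equal $\mu$. Picking $\pi_n \in \Cpl{\mu_n}{\mu}$ with $p$-cost tending to $\W{\mu_n}{\mu} =: \epsilon_n$, I would form the four-fold chain $\tilde\Gamma_n := \pi_n \dprod \gamma \dprod \pi_n^{-1}$ on $(\X \times \Y)^4$ whose successive two-fold marginals are $\pi_n$, $\gamma$, $\pi_n^{-1}$, and let $\gamma_n$ be the projection of $\tilde\Gamma_n$ onto its outer pair of $\X \times \Y$-coordinates, so that both $\X \times \Y$-marginals of $\gamma_n$ are $\mu_n$. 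Applying the $L^p(\tilde\Gamma_n)$-triangle inequality (Remark~\ref{rem:DXprop}) to the pointwise bounds
\begin{align*}
  \D_\X(x_1,x_4) & \leq \D_\X(x_1,x_2) + \D_\X(x_2,x_3) + \D_\X(x_3,x_4), \\
  \D_\Y(y_2,y_3) & \leq \D_\Y(y_2,y_1) + \D_\Y(y_1,y_4) + \D_\Y(y_4,y_3),
\end{align*}
and using the fact that the outer two $L^p$-contributions are computed against the two-fold marginals $\pi_n$ and $\pi_n^{-1}$ (and hence bounded by $\epsilon_n$, since $\D_\X, \D_\Y \leq \D_{\X \times \Y}$), yields $\D^\X(\gamma_n) \leq \delta + 2\epsilon_n$ and $\D^\Y(\gamma_n) \geq \D^\Y(\gamma) - 2\epsilon_n$. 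So $\gamma_n \in \Sh{\mu_n}{\delta + 2\epsilon_n}$, and rescaling by $r_n := (\delta/(\delta+2\epsilon_n))^p \leq 1$ via \eqref{eq:DXhomog} produces $r_n\gamma_n \in \Sh{\mu_n}{\delta}$ with $\D^\Y(r_n\gamma_n) = \tfrac{\delta}{\delta+2\epsilon_n}\D^\Y(\gamma_n) \to \D^\Y(\gamma)$. Taking $\liminf$ and then the supremum over $\gamma$ gives the lower bound.

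The upper bound argument is perfectly symmetric: pick $\gamma^{(n)} \in \Sh{\mu_n}{\delta}$ with $\D^\Y(\gamma^{(n)}) \geq \moc{\mu_n}{\delta} - 1/n$, form $\pi_n^{-1} \dprod \gamma^{(n)} \dprod \pi_n$, project to the outer $\X \times \Y$-pair, rescale into $\Sh{\mu}{\delta}$, and let $n \to \infty$.

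The main (mild) obstacle is the bookkeeping surrounding the four-fold chain. One has to make precise sense of $\pi_n \dprod \gamma \dprod \pi_n^{-1}$ on $(\X \times \Y)^4$ (by associativity of $\dprod$ disintegrating each factor after the first against the previous output), and then carefully identify the $L^p(\tilde\Gamma_n)$-norms of the outer displacement functions with integrals against the two-fold marginals $\pi_n$ and $\pi_n^{-1}$, so that they are genuinely controlled by $\W{\mu_n}{\mu}$. Once this is in place, both the $\D^\X$- and $\D^\Y$-comparisons and the rescaling are routine applications of \eqref{eq:DXhomog} and the $L^p$-triangle inequality.
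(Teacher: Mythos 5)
Your proposal is correct and is essentially the paper's own argument: the paper likewise conjugates a perturbation $\gamma$ (normalized via Lemma \ref{lem:mocSubPvsPr}) by a near-optimal coupling $\psi$, bounds $\D^\X$ and $\D^\Y$ of $\psi \mcmp \gamma \mcmp \psi^{-1}$ through the $L^p$-triangle inequality applied on $\psi \dprod \gamma \dprod \psi^{-1}$, and rescales by $\pa{\delta/(\delta+2\epsilon)}^p$ to land back in $\Sh \cdot \delta$. The only cosmetic difference is that the paper packages the two directions as a single two-sided inequality \eqref{eq:moccontinmuconc} rather than as $\liminf$/$\limsup$ statements along a sequence.
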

\begin{proof}
  Let $\mu, \nu \in \Pr{\X \times \Y}$ and let $\Wa(\mu,\nu) < \epsilon$. We will show that then \eqref{eq:moccontinmuconc} below holds. As both sides of \eqref{eq:moccontinmuconc} converge to $\moc \mu \delta$ as $\epsilon$ goes to $0$ this shows that $\mu \mapsto \moc \mu \delta$ is continuous at $\mu$.

  $\Wa(\mu,\nu) < \epsilon$ implies that there is $\psi \in \Cpl \mu \nu$ s.t.\ $\D^\X(\psi) \join \D^\Y(\psi) < \epsilon$.

  We want to bound $\moc \mu \delta$ in terms of $\moc \nu \delta$, so let $\gamma \in \Sh \mu \delta$ be arbitrary. By Lemma \ref{lem:mocSubPvsPr} we may as well assume that $\gamma$ is a probability measure. Then
{\def\unused#1{{\scriptstyle #1}}
  \begin{multline*}
    \D^\X(\psi \mcmp \gamma \mcmp \psi^{-1}) = \pa{\int \D(x_1,x_4)^p \d \pa{ \psi \mcmp \gamma \mcmp \psi^{-1} }(x_1,\unused{y_1},x_4,\unused{y_4})}^{1/p} \leq \\
    \pa{\int \pa<\big>{\D(x_1,x_2) + \D(x_2,x_3) + \D(x_3,x_4)}^p \d \pa{ \psi \dprod \gamma \dprod \psi^{-1} }(x_1,\unused{y_1},x_2,\unused{y_2},x_3,\unused{y_3},x_4,\unused{y_4})}^{1/p} \leq \\
    \D^\X(\psi) + \D^\X(\gamma) + \D^\X(\psi^{-1}) < \D^\X(\gamma) + 2 \epsilon < \delta + 2 \epsilon \fullstop
  \end{multline*}%
}

  Scaling down, we get that $r \cdot \psi \mcmp \gamma \mcmp \psi^{-1} \in \Sh \nu \delta$, where $r := \pa{\frac \delta {\delta + 2 \epsilon}}^p$. By definition of $\moc \nu \delta$
  \begin{align*}
    \D^\Y\pa{r \cdot \psi \mcmp \gamma \mcmp \psi^{-1}} \leq \moc \nu \delta
    \intertext{or}
    \D^\Y\pa{\psi \mcmp \gamma \mcmp \psi^{-1}} \leq \pa{ 1 + \frac {2 \epsilon} \delta } \moc \nu \delta
  \end{align*}
  We can bound $\D^\Y(\gamma)$ in terms of $\D^\Y\pa{\psi \mcmp \gamma \mcmp \psi^{-1}}$:
{\def\unused{}
  \begin{multline*}
    \D^\Y(\gamma) = \pa{\int \D(y_2,y_3)^p \d \pa{\psi \dprod \gamma \dprod \psi^{-1}}(\unused{x_1},y_1,\unused{x_2},y_2,\unused{x_3},y_3,\unused{x_4},y_4)}^{1/p} \leq \\
    \pa{\int \pa<\big>{\D(y_2,y_1) + \D(y_1,y_4) + \D(y_4,y_3)}^p \d \pa{\psi \dprod \gamma \dprod \psi^{-1}}(\unused{x_1},y_1,\unused{x_2},y_2,\unused{x_3},y_3,\unused{x_4},y_4)}^{1/p} \leq \\
    \D^\Y(\psi) + \D^\Y\pa{ \psi \mcmp \gamma \mcmp \psi^{-1} } + \D^\Y\pa{\psi^{-1}} < \\
    \pa{ 1 + \frac {2 \epsilon} \delta } \moc \nu \delta + 2 \epsilon
  \end{multline*}%
}
  As $\gamma$ was arbitrary this implies
  \begin{align*}
    \moc \mu \delta < \pa{ 1 + \frac {2 \epsilon} \delta } \moc \nu \delta + 2 \epsilon \fullstop
  \end{align*}
  Rearranging terms gives the left side of \eqref{eq:moccontinmuconc}, while repeating the argument with the roles of $\mu$ and $\nu$ swapped gives the right side of \eqref{eq:moccontinmuconc}.

  \begin{align}
    \label{eq:moccontinmuconc}
    \frac { \moc \mu \delta - 2 \epsilon } { 1 + \frac {2 \epsilon} \delta } < \moc \nu \delta < \pa{ 1 + \frac {2 \epsilon} \delta } \moc \mu \delta + 2 \epsilon
  \end{align}
\end{proof}

\begin{theorem}
  \label{thm:relco}
  Let $K \subseteq \FunP \X \Y$. Then $K$ is relatively compact in $\FunP \X \Y$ (equipped with the $p$-Wasserstein metric) iff
  \begin{enumerate}
    \item \label{it:relcoinPr}
      $K$ is relatively compact in $\Pr { \X \times \Y }$ (equipped with the $p$-Wasserstein metric) and
    \item \label{it:gleichgradigstetig}
      $ \displaystyle\lim_{\delta \searrow 0} \sup_{\mu \in K} \moc \mu \delta = 0 $.
  \end{enumerate}
\end{theorem}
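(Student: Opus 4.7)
The proof has a clean structure built entirely on the three preceding lemmas (\ref{lem:mocSubPvsPr}, \ref{lem:FunPCharacterisation}, \ref{lem:moccontinmu}), and the necessity of condition (\ref{it:gleichgradigstetig}) will be the substantive step.

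For the easy direction, sufficiency, I would argue as follows. Let $(\mu_n)_n$ be a sequence in $K$. By (\ref{it:relcoinPr}), after passing to a subsequence, $\mu_n \to \mu^*$ in $\Pr{\X \times \Y}$ (i.e.\ in $p$-Wasserstein), for some $\mu^* \in \Pr{\X \times \Y}$. It suffices to show $\mu^* \in \FunP \X \Y$, for then convergence in the $p$-Wasserstein metric is convergence in the subspace $\FunP \X \Y$. For any fixed $\delta > 0$, Lemma \ref{lem:moccontinmu} yields $\moc {\mu^*} \delta = \lim_n \moc {\mu_n} \delta \leq \sup_{\mu \in K} \moc \mu \delta$. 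Letting $\delta \searrow 0$ and using (\ref{it:gleichgradigstetig}) gives $\lim_{\delta \searrow 0} \moc{\mu^*}\delta = 0$, so $\mu^* \in \FunP \X \Y$ by Lemma \ref{lem:FunPCharacterisation}.

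For necessity, (\ref{it:relcoinPr}) is trivial because $\FunP \X \Y$ carries the subspace topology. For (\ref{it:gleichgradigstetig}), I would argue by contradiction. Suppose there exist $\epsilon' > 0$, a sequence $\delta_n \searrow 0$ and measures $\mu_n \in K$ with $\moc{\mu_n}{\delta_n} \geq \epsilon'$. By relative compactness of $K$ in $\FunP \X \Y$ we may extract a subsequence (not relabeled) converging in $p$-Wasserstein to some $\mu^* \in \FunP \X \Y$. Now fix any $\delta > 0$. For $n$ large enough that $\delta_n \leq \delta$, monotonicity of the modulus gives
\begin{align*}
  \epsilon' \leq \moc{\mu_n}{\delta_n} \leq \moc{\mu_n}{\delta}\fullstop
\end{align*}
Passing to the limit $n \to \infty$ and invoking Lemma \ref{lem:moccontinmu} yields $\epsilon' \leq \moc{\mu^*}{\delta}$. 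Since $\delta > 0$ was arbitrary, $\lim_{\delta \searrow 0} \moc{\mu^*}\delta \geq \epsilon' > 0$, contradicting $\mu^* \in \FunP \X \Y$ via Lemma \ref{lem:FunPCharacterisation}.

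The main obstacle I anticipate is the necessity of (\ref{it:gleichgradigstetig}): one needs a \emph{uniform} control of the moduli $\mocf {\mu_n}$ near $0$, while Lemma \ref{lem:FunPCharacterisation} only supplies this pointwise. The trick above sidesteps the need for a net argument by combining monotonicity of $\mocf \mu$ in $\delta$ (which lets one replace the moving argument $\delta_n$ by a fixed $\delta$) with continuity in $\mu$ (Lemma \ref{lem:moccontinmu}, which lets one pass from the sequence $(\mu_n)_n$ to the limit $\mu^*$). The alternative approach, of using a finite $\epsilon$-net together with the quantitative bound $\moc\mu\delta < (1 + 2\epsilon/\delta)\moc{\mu_i}\delta + 2\epsilon$ from Lemma \ref{lem:moccontinmu}, would require coupling $\epsilon$ and $\delta$ carefully and is strictly less efficient.
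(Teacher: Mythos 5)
Your proposal is correct and matches the paper's own proof in all essentials: sufficiency via Lemma \ref{lem:moccontinmu} plus Lemma \ref{lem:FunPCharacterisation} applied to a subsequential limit, and necessity of (\ref{it:gleichgradigstetig}) by the same combination of monotonicity in $\delta$ (to freeze the argument) and continuity in $\mu$ (to pass to the limit), the only cosmetic difference being that the paper phrases this last step as a contrapositive rather than a contradiction.
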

\begin{proof}
  We first show that \itref{it:relcoinPr} and \itref{it:gleichgradigstetig} together imply that $K$ is relatively compact in $\FunP \X \Y$.

  To that end we show that every sequence in $K$ has a subsequence which converges to a point in $\FunP \X \Y$. So let $(\mu_n)_n$ be a sequence in $K$. By \itref{it:relcoinPr} there is a subsequence $(\mu_{n_k})_k$ which converges to a point $\mu \in \Pr { \X \times \Y }$.
  By continuity of the modulus of continuity in its measure argument, i.e.\ by Lemma \ref{lem:moccontinmu}, and by assumption \itref{it:gleichgradigstetig}
  \begin{align*}
    \lim_{\delta \searrow 0} \moc \mu \delta = \lim_{\delta \searrow 0} \lim_{k \to \infty} \moc {\mu_{n_k}} \delta \leq \lim_{\delta \searrow 0} \sup_{\nu \in K} \moc {\nu} \delta = 0 \fullstop
  \end{align*}
  By Lemma \ref{lem:FunPCharacterisation} this implies $\mu \in \FunP \X \Y$.

  The implication from \enquote{$K$ relatively compact in $\FunP \X \Y$} to \itref{it:relcoinPr} is trivial. To show that \enquote{$K$ relatively compact in $\FunP \X \Y$} implies \itref{it:gleichgradigstetig} we show its contrapositive.

  So assume that \itref{it:gleichgradigstetig} is false. Then there is an $\epsilon$ and for all $n \in \N$ a measure $\mu_n \in K$ with $\moc {\mu_n} {\frac 1 n} \geq \epsilon$. Because $\delta \mapsto \moc {\mu_n} \delta$ is monotone this means that $\restr{\pa{\mocf {\mu_n}}}{\lcro{\frac 1 n,\infty}} \geq \epsilon$. For any subsequence $(\mu_{n_k})_k$ of $(\mu_n)_n$ which converges to some $\mu \in \Pr { \X \times \Y }$ we have again by Lemma \ref{lem:moccontinmu}
  \begin{align*}
    \lim_{\delta \searrow 0} \moc \mu \delta = \lim_{\delta \searrow 0} \lim_{k \to \infty} \moc {\mu_{n_k}} \delta \geq \varepsilon
    \fullstop
  \end{align*}
  This means that $\mu \notin \FunP \X \Y$.
\end{proof}

\ilcomment{
  I haven't spent too much time thinking about the interconnection between the different versions of the modulus of continuity for different $p$-Wasserstein metrics, $p \in \set{0} \cup \lcro{1,\infty}$. It seems to me that in this theorem we could have used the \enquote{$0$-Wasserstein}-version of the modulus of continuity as well...
  The reason I'm not really inclined to do it is because I suspect that somehow the theory is simplest when you just stick with one $p$ and it feels \enquote{ugly} to mix different versions and to introduce this arbitrary element of turning the metric into a bounded metric by some procedure.
  But this thing, that it seems to me here we could just always be using the $0$-Wasserstein-version might be hinting that really there isn't much of the $p$ in those parts of the modulus of continuity that we care about (i.e.\ in the asymptotic behaviour at $0$).
  I'm not terribly motivated to really figure it out, just wanted to mention that this is something I noticed in case someone else (or me at some later time) thinks that this is really, really important to figure out.
}

\section{Relative Compactness in the Nested Weak Topology}
\label{sec:relconested}

We are now ready to prove Theorem \ref{thm:relconested}. We restate it below as Theorem \ref{thm:relconested2}, generalizing from the weak topology to the one induced by the $p$-Wasserstein metric.

\begin{definition}
  The $\Wa$-information topology is the initial topology with respect to the maps $\I_t$, $t \in \set{1, \dots, N-1}$, with the target spaces $\Pr {\Z^t \times \Pr {\Z^{N-t}}}$ equipped with the topology which arises when we use the $p$-Wasserstein metric throughout as per our convention introduced at the beginning of Section \ref{sec:wassersteinp}.
\end{definition}

For this to make sense we need to check that $\I_t(\mu) \in \Pr { \Z^t \times \Pr {\Z^{N-t}}}$ i.e.\ that
\begin{align*}
  \int \D(\hat z_0, \hat z)^p \d\pa{\I_t(\mu)}(\hat z) < \infty
\end{align*}
for some $\hat z_0 \in \Z^t \times \Pr {\Z^{N-t}}$. Let $z_0 \in \Z^t$, $z_0' \in \Z^{N-t}$, and set $\hat z_0 := (z_0, \delta_{z_0'})$. Then one easily checks
\begin{align*}
  \int \D(\hat z_0, \hat z)^p \d\pa{\I_t(\mu)}(\hat z) = \int \D((z_0,z_0'), z) \d \mu(z) < \infty \fullstop
\end{align*}

At this point we would also like to add another minor generalization, which is to allow the process to take its values in different spaces for different times. Let $\Z_t$, $t \in \set{1, \dots, N}$ be Polish spaces. The role of $\Z^N$ is now played by $\prod_{t=1}^N \Z_t$ and the process at time $t$ takes values in $\Z_t$. We introduce the shorthands
\newcommand{\overbar}[1]{\mkern 1.5mu\overline{\mkern-1.5mu#1\mkern-1.5mu}\mkern 1.5mu}
\renewcommand{\Xb}{\overbar\Z}
\begin{align*}
  \Xb_s^t & := \prod_{i=s}^t \Z_i  &
  \Xb     & := \Xb_1^N &
  \Xb^t   & := \Xb_1^t &
  \Xb_t   & := \Xb_t^N \fullstop
\end{align*}

\begin{theorem}
  \label{thm:relconested2}
  $K \subseteq \Pr {\Xb}$ is relatively compact in the $\Wa$-information topology iff
  \begin{enumerate}
    \item \label{it:weaklycompact2}
      $K$ is relatively compact in $\Pr {\Xb}$, i.e.\ in the topology induced by the $p$-Wasserstein metric and
    \item \label{it:kmalgleichgradigstetig2}
      $\displaystyle \lim_{\delta \searrow 0} \sup_{\mu \in K} \moc {\I_t(\mu)} \delta = 0$ for all $t \in \set{ 1, \dots, N-1 }$.
  \end{enumerate}
\end{theorem}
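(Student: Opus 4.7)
The plan is to reduce to Theorem~\ref{thm:relco} applied separately to each $\I_t(K) \subseteq \FunP{\Xb^t}{\Pr{\Xb_{t+1}}}$ for $t=1,\dots,N-1$, and to combine the resulting convergent subsequences by a diagonal extraction. The main auxiliary tool is the barycentre (\enquote{gluing}) map $\bary\colon \Pr{\Xb^t \times \Pr{\Xb_{t+1}}} \to \Pr{\Xb}$ defined by $\bary(\eta) := \int \delta_x \otimes \rho \,\mathrm d\eta(x,\rho)$. A short coupling argument — combine an optimal $\Wa$-coupling of $\eta,\eta'$ with fibrewise optimal couplings of their $\Pr{\Xb_{t+1}}$-coordinates — shows that $\bary$ is $1$-Lipschitz in the $p$-Wasserstein metric, and by construction $\bary \circ \I_t = \mathrm{id}$.

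For the \emph{forward direction}, $1$-Lipschitz continuity of $\bary$ makes the $p$-Wasserstein topology on $\Pr{\Xb}$ coarser than the $\Wa$-information topology, so info-rel.-compactness of $K$ gives (1). Moreover, every info-convergent subsequence $\mu_{n_k}\to \mu$ satisfies $\I_t(\mu_{n_k})\to\I_t(\mu)$ in $\Wa$ with $\I_t(\mu)\in \FunP{\Xb^t}{\Pr{\Xb_{t+1}}}$, so $\I_t(K)$ is rel.\ compact in $\FunP{\Xb^t}{\Pr{\Xb_{t+1}}}$ and Theorem~\ref{thm:relco} delivers~(2).

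The \emph{backward direction} has more content. The key intermediate step is to upgrade (1) alone to rel.-compactness of $\I_t(K)$ in $\Pr{\Xb^t\times \Pr{\Xb_{t+1}}}$: the $\Xb^t$-marginal is $\marg{\mu}{\Xb^t}$, covered by (1), but for the $\Pr{\Xb_{t+1}}$-marginal $\nu_\mu := \Law^\mu(\Law^\mu(Z_{t+1:N}\mid Z_{1:t}))$ I would use a layered Markov argument. Given $\epsilon>0$, pick compacts $\mathcal K_k \subseteq \Xb_{t+1}$ with $\marg{\mu}{\Xb_{t+1}}(\mathcal K_k^c) < \epsilon/(k\cdot 2^k)$ uniformly in $\mu\in K$ (using weak tightness of $\marg{K}{\Xb_{t+1}}$). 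Markov's inequality applied to the $[0,1]$-valued random variable $\Law^\mu(Z_{t+1:N}\mid Z_{1:t})(\mathcal K_k^c)$, whose $\mu$-expectation equals $\marg{\mu}{\Xb_{t+1}}(\mathcal K_k^c)$, then gives $\P(\Law^\mu(Z_{t+1:N}\mid Z_{1:t})(\mathcal K_k^c) > 1/k) < \epsilon/2^k$, so the intersection $\bigcap_k\{\rho : \rho(\mathcal K_k)\geq 1-1/k\}$ is weakly tight in $\Pr{\Xb_{t+1}}$ and carries $\nu_\mu$-mass $>1-\epsilon$, uniformly in $\mu$. An analogous argument, starting from uniform $p$-integrability of $\marg{K}{\Xb_{t+1}}$, controls $\Wa(\delta_{z_0},\cdot)^p$ under $\nu_\mu$. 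Combined with hypothesis~(2), Theorem~\ref{thm:relco} then gives rel.-compactness of $\I_t(K)$ in $\FunP{\Xb^t}{\Pr{\Xb_{t+1}}}$.

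To conclude: given $(\mu_n)\subseteq K$, use (1) to extract $\mu_{n_k}\to \mu$ in $\Wa$, then a diagonal extraction over $t = 1, \dots, N-1$ to obtain a further subsequence $(\mu_{n_{k_\ell}})_\ell$ along which $\I_t(\mu_{n_{k_\ell}}) \to \eta_t \in \FunP{\Xb^t}{\Pr{\Xb_{t+1}}}$ in $\Wa$ for every $t$. Continuity of $\bary$ forces $\bary(\eta_t) = \lim_\ell \bary(\I_t(\mu_{n_{k_\ell}})) = \mu$; since any graph-measure in $\FunP{\Xb^t}{\Pr{\Xb_{t+1}}}$ with prescribed barycentre $\mu$ is determined, up to $\marg{\mu}{\Xb^t}$-null sets, by the essentially unique disintegration of $\mu$ along $\Xb^t$, we conclude $\eta_t = \I_t(\mu)$, establishing convergence in the $\Wa$-information topology. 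I expect the main obstacle to be the layered Markov/tightness step for $\nu_\mu$: it is the technical bridge that translates hypothesis~(1), phrased on $\Pr{\Xb}$, into the input Theorem~\ref{thm:relco} requires, phrased on $\Pr{\Xb^t\times \Pr{\Xb_{t+1}}}$.
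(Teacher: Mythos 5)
Your proposal is correct and follows essentially the same route as the paper: your barycentre map $\bary$ is the paper's $\und{\Xb^t}{\Xb_{t+1}}$ (with the same $1$-Lipschitz coupling argument), your layered Markov/tightness step for $\nu_\mu$ is precisely the content of Lemmata \ref{lem:undAndRelCo}--\ref{lem:compinPrPrX} (including the $\phi$-weighted refinement needed to get $\Wa$-compactness rather than mere weak tightness, which you flag as the moment control), and your identification of the limit via uniqueness of graph measures with prescribed barycentre is the sequential form of the paper's closedness-of-the-diagonal argument. The only cosmetic difference is that you organize the endgame by diagonal extraction where the paper packages it as compactness in the product $\prod_t \FunP{\Xb^t}{\Pr{\Xb_{t+1}}}$ intersected with the closed set $\I[\Prob(\Xb)]$.
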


\begin{proof}[Proof of Theorem \ref{thm:relconested2} (and therefore also Theorem \ref{thm:relconested})]
  That $K$ being relatively compact in the $\Wa$-information topology implies \itref{it:weaklycompact} and \itref{it:kmalgleichgradigstetig} is clear because when $\Pr \Xb$ is equipped with the $\Wa$-information topology both the identity to $\Pr \Xb$ equipped with the usual topology and all of the $\I_t$ are continuous, therefore map relatively compact sets to relatively compact sets.

  To show the reverse implication we need to show that
  \begin{enumerate}[label=(\alph*)]
    \item \label{it:PPandPcompactness} $K$ relatively compact in the usual topology on $\Pr \Xb$ implies that $\I_t\br[]{K}$ is relatively compact in $\Pr {\Xb^t \times \Pr {\Xb_{t+1}}}$.
    \item \label{it:IXclosed} $\I[\Pr \Xb]$ is a closed subset of $\prod_{t=1}^{N-1} \I_t[\Pr \Xb]$ where $\I(\mu) := (\I_t(\mu))_t$,
  \end{enumerate}
  because then \itref{it:weaklycompact}, \itref{it:kmalgleichgradigstetig}, \itref{it:PPandPcompactness} and Theorem \ref{thm:relco} imply that $\prod_{t=1}^{N-1} \I_t[K]$ is relatively compact in $\prod_{t=1}^{N-1} \FunP {\Xb^t} {\Pr {\Xb_{t+1}}}$, i.e.\ that there is a compact subset $K'$ of $\prod_{t=1}^{N-1} \FunP {\Xb^t} {\Pr {\Xb_{t+1}}}$ which contains $\I[K]$. $K' \cap \I[\Pr \Xb]$ is still compact by \itref{it:IXclosed} and still contains $\I[K]$, showing that $\I[K]$ is relatively compact in $\I[\Pr \Xb]$.

  Showing \itref{it:IXclosed} is relatively simple.
  For two Polish spaces $\X$ and $\Y$ we define a map
  \begin{align*}
    \und \X \Y & : \Pr {\X \times \Pr \Y} \rightarrow \Pr {\X \times \Y} \\
    \intertext{which sends $\nu \in \Pr {\X \times \Pr \Y}$ to the probability $\nu'$ satisfying}
    \int f \d \nu' & = \iint f(x,y) \d \hat y (y) \d \nu (x, \hat y) \fullstop
  \end{align*}
  $\und \X \Y$ is easily seen to be Lipschitz-continuous with constant $1$ by writing out the definition
  \begin{multline*}
    \D_{\Pr{ \X \times \Pr \Y}}^p(\mu, \nu) = \\ \inf_{\gamma \in \Cpl \mu \nu} \int \D(x_1,x_2)^p + \inf_{\hat \gamma \in \Cpl {\hat y_1} {\hat y_2}} \int \D(y_1,y_2)^p \d \hat\gamma(y_1,y_2) \d \gamma(x_1,\hat y_1, x_2,\hat y_2)
  \end{multline*}
  and employing a measurable selector for the inner transport plans $\hat\gamma$ to create from a transport plan $\gamma$ between $\mu$ and $\nu$ a transport plan between $\und \X \Y (\mu)$ and $\und \X \Y (\nu)$ with the same cost as $\gamma$.

  The set $\I[\Pr \Xb]$ is the preimage of the diagonal $\set{ (\mu)_{t \in \set{ 1 \dots N-1 }}}[\mu \in \Pr \Xb] \subseteq {\Pr \Xb}^{N-1}$ under the map which sends $(\mu_t)_t$ to $\pa{\und {\Xb^t} {\Xb_{t+1}}(\mu_t)}_t$.
  This last map is continuous and the diagonal is closed.

  \itref{it:PPandPcompactness} is a special case of Lemma \ref{lem:undAndRelCo} below.
\end{proof}

\begin{lemma}
  \label{lem:undAndRelCo}
  $K \subseteq \Pr { \X \times \Pr \Y }$ is relatively compact iff $\und \X \Y [K]$ is relatively compact.
\end{lemma}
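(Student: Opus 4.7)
The direction ``$K$ relatively compact $\Rightarrow$ $\und \X \Y [K]$ relatively compact'' is immediate from the Lipschitz continuity of $\und \X \Y$ established in the proof of Theorem \ref{thm:relconested2}. For the converse the plan is to invoke the standard characterisation of relative compactness in $\Wa$ as the conjunction of tightness and uniform $p$-moment integrability, and to verify each property for $K$ separately, starting from the corresponding property of $\und \X \Y [K]$.

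For tightness, $\pi_1 \nu$ coincides with the first marginal of $\und \X \Y (\nu)$ for every $\nu$, so tightness of the first marginals of $K$ in $\Pr \X$ is automatic. To handle the second marginals I fix $\varepsilon > 0$ and, for each $n \in \N$, use the $\Wa$-relative compactness of $\und \X \Y [K]$ to produce a compact $B_n \subseteq \Y$ and an $R_n > 0$ such that, uniformly in $\nu \in K$,
\begin{align*}
  \und \X \Y (\nu)(\X \times B_n^c) \leq \varepsilon \, 4^{-n}
  \quad\text{and}\quad
  \int_{\{\D_\Y(y_0, y) > R_n\}} \D_\Y(y_0, y)^p \d\und \X \Y(\nu)(x,y) \leq \varepsilon \, 4^{-n} \fullstop
\end{align*}
Rewriting both quantities as $\int \phi(\hat y) \d\pi_2 \nu(\hat y)$ (with $\phi$ being $\hat y \mapsto \hat y(B_n^c)$ in one case and $\hat y \mapsto \int_{\{\D_\Y > R_n\}} \D_\Y^p \d\hat y$ in the other) and applying Markov's inequality, $\pi_2 \nu$ charges mass at least $1 - \varepsilon \, 2^{1-n}$ to $\{\phi \leq 2^{-n}\}$. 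Intersecting these events over both choices of $\phi$ and all $n$ produces an $L \subseteq \Pr \Y$ which is both tight (since $\hat y(B_n^c) \leq 2^{-n}$ uniformly on $L$) and uniformly $p$-integrable -- hence $\Wa$-relatively compact -- and which satisfies $\pi_2 \nu(L) \geq 1 - O(\varepsilon)$ for all $\nu \in K$. This yields tightness of $K$.

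For uniform $p$-moment integrability, write $h(x, y) := \D_{\X \times \Y}\pa<\big>{(x_0,y_0),(x,y)}^p$ and $H(x, \hat y) := \D_{\X \times \Pr \Y}\pa<\big>{(x_0, \delta_{y_0}),(x, \hat y)}^p$; the key identity is $H(x, \hat y) = \int h(x, y) \d\hat y(y)$. By de la Vallée-Poussin's theorem, uniform $p$-integrability of $\und \X \Y [K]$ supplies a convex increasing $\Phi : \R_+ \to \R_+$ with $\Phi(t)/t \to \infty$ and $\sup_{\nu \in K} \int \Phi \circ h \d \und \X \Y (\nu) < \infty$. Jensen's inequality against the probability measure $\hat y$ yields $\Phi\pa<\big>{H(x, \hat y)} = \Phi\pa{\int h \d\hat y} \leq \int \Phi \circ h \d\hat y$; integrating against $\nu$ bounds $\int \Phi \circ H \d\nu$ uniformly by $\int \Phi \circ h \d \und \X \Y (\nu)$. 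The reverse direction of de la Vallée-Poussin then delivers uniform $p$-moment integrability of $K$.

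The only real obstacle is this last step: uniform integrability is a non-linear condition on measures and does not transfer by a direct push-forward argument. The convex-function detour through de la Vallée-Poussin and Jensen is what closes the gap.
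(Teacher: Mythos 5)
Your proof is correct, and its second half takes a genuinely different route from the paper's. The paper reduces the converse direction to two reusable lemmas: relative compactness in $\Pr{\X \times \Pr \Y}$ is equivalent to relative compactness of the two marginal families (Lemma \ref{lem:compinPrXtimesY}), and relative compactness in $\Pr{\Pr\Y}$ is equivalent to that of the averaged measures (Lemma \ref{lem:compinPrPrX}); both are proved via the single-functional Prokhorov variant of Lemma \ref{lem:WpProkhorov}, which folds tightness and the $p$-moment condition into one integral $\int_{L^c}(1+\D(x_0,\cdot)^p)\d\mu<\epsilon$. Your tightness argument is essentially the paper's proof of Lemma \ref{lem:compinPrPrX} in disguise -- the same Markov-inequality construction of a compact $L\subseteq\Pr\Y$ as a countable intersection of closed sublevel sets -- but your treatment of the moment condition is new: you exploit the identity $H(x,\hat y)=\int h(x,y)\d\hat y(y)$ (i.e.\ the affine structure underlying $\und\X\Y$) together with de la Vall\'ee-Poussin and Jensen to transfer uniform $p$-integrability directly from $\und\X\Y[K]$ to $K$, without ever splitting the product moment as the paper does via the bound $\varphi_{\X\times\Y}\le\varphi_\X+\varphi_\Y$ and the auxiliary compacts $M,\bar M,N,\bar N$. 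What the paper's route buys is uniformity (one criterion, two lemmas that are reused elsewhere in Section \ref{sec:relconested}); what yours buys is a cleaner isolation of the moment issue and no need for the weighted product-marginal lemma. Two small points you should make explicit: choose the radii $R_n\to\infty$ so that your set $L$ really has uniformly integrable $p$-moments, and note that the defining conditions of $L$ are closed (the relevant functionals are lower semicontinuous), or else pass to the closure.
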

\begin{proof}
  As $\und \X \Y$ is continuous the implication from left to right is clear.

  The other direction is also not hard using Lemmata \ref{lem:compinPrXtimesY} and \ref{lem:compinPrPrX} below, whose proofs we postpone:

  If $\und \X \Y [K]$ is relatively compact, then $\set{ \marg \mu \X }[\mu \in K] = \set{ \marg \nu \X }[{\nu \in \und \X \Y[K]}]$ is relatively compact.
  $\set{ \marg \mu \Y }[\mu \in K]$ is also relatively compact by Lemma \ref{lem:compinPrPrX} because $\avg \Y \big[ \set{ \marg \mu \Y }[\mu \in K] \big] = \set{\marg \nu \Y}[{\nu \in \und \X \Y [K]}]$ is relatively compact. Therefore by Lemma \ref{lem:compinPrXtimesY} $K$ is compact.
\end{proof}

\begin{lemma}
  \label{lem:compinPrXtimesY}
  Let $K \subseteq \Pr {\X \times \Y}$. $K$ is relatively compact iff $K_\X := \set{\marg \mu \X}[\mu \in K]$ and $K_\Y := \set{\marg \mu \Y}[\mu \in K]$ are relatively compact.
\end{lemma}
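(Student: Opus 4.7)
The forward direction is immediate since the marginal projections $\mu \mapsto \marg{\mu}{\X}$ and $\mu \mapsto \marg{\mu}{\Y}$ are $1$-Lipschitz in $\Wa$ (push any coupling forward under the obvious projection $(\X \times \Y) \times (\X \times \Y) \to \X \times \X$ or $\Y \times \Y$), hence continuous, and continuous maps preserve relative compactness.

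For the converse I would work sequentially. Given $(\mu_n)_n \subseteq K$, relative compactness of $K_\X$ and $K_\Y$ allows me to pass to a subsequence along which both $\marg{\mu_n}{\X} \to \alpha$ and $\marg{\mu_n}{\Y} \to \beta$ in the $p$-Wasserstein metric, for some $\alpha \in \Pr{\X}$, $\beta \in \Pr{\Y}$. Both marginal sequences are then in particular tight on their respective spaces, and tightness of $(\mu_n)_n$ on $\X \times \Y$ then follows from a standard product-of-compacts argument: for any $\epsilon > 0$ choose compacts $A \subseteq \X$, $B \subseteq \Y$ with $\marg{\mu_n}{\X}(A^c), \marg{\mu_n}{\Y}(B^c) < \epsilon/2$ uniformly in $n$, so that the compact $A \times B \subseteq \X \times \Y$ satisfies $\mu_n((A\times B)^c) < \epsilon$. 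Prokhorov's theorem yields a further subsequence converging weakly to some $\mu \in \PrN{\X \times \Y}$; since the marginal maps are continuous in the usual weak topology, $\marg{\mu}{\X} = \alpha$ and $\marg{\mu}{\Y} = \beta$.

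The main obstacle is to upgrade this weak convergence to convergence in $\Wa$. For this I would invoke the standard characterization that $\mu_n \to \mu$ in $\Wa$ is equivalent to $\mu_n \to \mu$ weakly together with convergence of the $p$-th moment $\int \D(z_0, z)^p \d\mu_n(z) \to \int \D(z_0,z)^p \d\mu(z)$ for some (equivalently any) reference point $z_0 \in \X \times \Y$. Fixing $z_0 = (x_0,y_0)$ and using the product structure of the metric,
\begin{align*}
  \int \D_{\X\times\Y}(z_0,z)^p \d\mu_n(z) = \int \D_\X(x_0,x)^p \d \marg{\mu_n}{\X}(x) + \int \D_\Y(y_0,y)^p \d \marg{\mu_n}{\Y}(y) \fullstop
\end{align*}
Each summand on the right converges to the corresponding moment of $\alpha = \marg{\mu}{\X}$ and $\beta = \marg{\mu}{\Y}$ because the marginals converge in $\Wa$, so the total moment converges to $\int \D_{\X \times \Y}(z_0,z)^p \d\mu(z)$. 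Combined with the weak convergence already established, this gives $\mu_n \to \mu$ in $\Wa$, showing that $K$ is relatively compact.
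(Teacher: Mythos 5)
Your proof is correct, but it takes a genuinely different route from the paper. The paper first establishes a Prokhorov-type criterion for relative compactness in $\Wa$ (Lemma \ref{lem:WpProkhorov}: uniform smallness of $\int_{L^c} 1 + \D(x_0,\cdot)^p \d\mu$ outside a compact $L$) and then verifies it for $K$ directly by an explicit construction, taking $L := M \times \bar N \cup \bar M \times N$ with a second, finer pair of compacts $\bar M, \bar N$ chosen so that the mass of $\mu$ on $M \times N^c$ is small enough to absorb the factor $\sup_M \phi_\X$. You instead argue sequentially: extract a subsequence along which both marginals converge in $\Wa$, get weak subsequential convergence of $(\mu_n)_n$ from the standard product-of-compacts tightness argument, and then upgrade to $\Wa$-convergence via the characterization \enquote{weak convergence plus convergence of $p$-th moments}, which the paper itself cites from Villani in the proof of Lemma \ref{lem:WpProkhorov}. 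Your key observation is that with the $\ell^p$-product metric the $p$-th moment of $\mu_n$ splits \emph{exactly} as the sum of the marginal $p$-th moments, each of which converges because the marginals converge in $\Wa$; this makes the upgrade step essentially free and avoids the paper's slightly fiddly double choice of compacts. (You should note explicitly that the weak limit $\mu$ lies in $\Pr{\X\times\Y}$, i.e.\ has finite $p$-th moment; this is immediate from the same moment identity since $\alpha$ and $\beta$ have finite moments.) What the paper's approach buys in exchange is a uniform, non-sequential criterion witnessed by a single compact set for all of $K$ at once, in the same form as its other compactness lemmata; what yours buys is a shorter argument that leans on a standard, citable equivalence.
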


In analogy to the above definition of $\und \X \Y$ we define
  \begin{align*}
    \avg \X & : \Pr {\Pr \X} \rightarrow \Pr \X \\
    \int f \d(\avg \X(\mu)) & = \int f(x) \d \nu(x) \d \mu(\nu) \fullstop
 \end{align*}

\begin{lemma}
  \label{lem:compinPrPrX}
  Let $K \subseteq \Pr{ \Pr \X }$. Then $K$ is relatively compact iff $\avg \X[K]$ is relatively compact.
\end{lemma}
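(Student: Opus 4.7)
The \enquote{only if} direction follows from the fact that $\avg\X \colon \Pr{\Pr\X} \to \Pr\X$ is $1$-Lipschitz in the $p$-Wasserstein metric, by the same measurable-selector argument already used for $\und\X\Y$ in the proof of Theorem \ref{thm:relconested2}: from an optimal coupling $\gamma$ of $\mu_1, \mu_2 \in \Pr{\Pr\X}$ and a measurable selection $(\nu_1,\nu_2) \mapsto \hat\gamma_{\nu_1,\nu_2}$ of optimal couplings of $\nu_1$ and $\nu_2$, the mixture $\int \hat\gamma_{\nu_1,\nu_2} \d\gamma(\nu_1,\nu_2)$ couples $\avg\X(\mu_1)$ and $\avg\X(\mu_2)$ with $p$-cost $\int \Wa(\nu_1,\nu_2)^p \d\gamma = \Wa(\mu_1,\mu_2)^p$.

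For the converse I would invoke the standard characterization of relative compactness in the $p$-Wasserstein topology: on a Polish metric space $\mathcal Y$, $L \subseteq \Pr{\mathcal Y}$ is relatively compact iff $L$ is tight and satisfies $\lim_{R \to \infty} \sup_{\eta \in L} \int_{\D(y_0, y) > R} \D(y_0, y)^p \d\eta(y) = 0$ for some (equivalently every) $y_0 \in \mathcal Y$. The plan is to derive both conditions for $K \subseteq \Pr{\Pr\X}$ at base point $\delta_{x_0}$ from their counterparts for $\avg\X[K] \subseteq \Pr\X$ at $x_0 \in \X$.

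For tightness of $K$, pick sequences $\alpha_n, \beta_n \searrow 0$. Relative compactness of $\avg\X[K]$ supplies, for each $n$, a compact $K_n \subseteq \X$ and $R_n > 0$ with $\avg\X(\nu)(K_n^c) < \alpha_n \epsilon / 2^{n+2}$ and $\int_{\D(x_0,x) > R_n} \D(x_0,x)^p \d\avg\X(\nu)(x) < \beta_n \epsilon / 2^{n+2}$ uniformly in $\nu \in K$. Unfolding $\avg\X(\nu)(A) = \int \mu(A) \d\nu(\mu)$ and applying Markov's inequality in $\nu$, the set
\begin{align*}
  C := \set<\big>{\mu \in \Pr\X}[\mu(K_n) \geq 1 - \alpha_n \text{ and } \int_{\D(x_0,x) > R_n} \D(x_0,x)^p \d\mu(x) \leq \beta_n \text{ for all } n]
\end{align*}
satisfies $\nu(C) \geq 1 - \epsilon$ for every $\nu \in K$, while the conditions $\alpha_n, \beta_n \searrow 0$ simultaneously make $C$ tight and uniformly $p$-integrable in $\Pr\X$, so its $p$-Wasserstein closure is the compact subset of $\Pr\X$ required for tightness of $K$ in $\Pr{\Pr\X}$.

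For uniform $p$-integrability of $K$ at $\delta_{x_0}$: write $F(\mu) := \Wa(\delta_{x_0}, \mu)^p = \int \D(x_0, x)^p \d\mu(x)$ and $g(r,\mu) := \int_{\D(x_0,x) > r} \D(x_0,x)^p \d\mu(x)$. The trivial splitting $F(\mu) \leq r^p + g(r,\mu)$ shows that $\{F(\mu) > 2 r^p\} \subseteq \{g(r,\mu) > r^p\}$ and that on this event $F(\mu) \leq 2 g(r,\mu)$, so
\begin{align*}
  \int_{F(\mu) > 2r^p} F(\mu) \d\nu(\mu) \leq 2 \int g(r,\mu) \d\nu(\mu) = 2 \int_{\D(x_0,x) > r} \D(x_0,x)^p \d\avg\X(\nu)(x),
\end{align*}
and the right-hand side tends to $0$ uniformly in $\nu \in K$ by uniform $p$-integrability of $\avg\X[K]$. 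The main obstacle I foresee is this last step: a naive Jensen-type bound would only give uniformly bounded $F$, not uniformly integrable $F$, and the self-improving tail bound $F(\mu) \leq 2 g(r,\mu)$ on $\{F(\mu) > 2r^p\}$ is precisely what allows the decay in $r$ to be transferred from $\avg\X(\nu)$ back to $\nu$.
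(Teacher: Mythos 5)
Your proof is correct, and it reaches the hard direction by a noticeably different decomposition than the paper, so a comparison is worthwhile. Both arguments share the same skeleton: the easy direction via (Lipschitz-)continuity of $\avg \X$, and the converse by using Markov's inequality to convert smallness of integrals against $\avg\X(\nu)$ into smallness of the $\nu$-measure of sets of \enquote{bad} $\mu$, then intersecting countably many such sublevel sets to produce a compact subset of $\Pr\X$. The paper, however, works throughout with the single combined functional $\phi_\X(x)=1+\D(x_0,x)^p$ and its one-condition Prokhorov variant (Lemma \ref{lem:WpProkhorov}); it therefore has to bound $\int_{N^c}\phi_{\Pr\X}\d\nu$ with the \emph{unbounded} integrand $\phi_{\Pr\X}$, which it does via a doubly-indexed family of compacts $L_n, M_n\subseteq\X$, with $M_n$ tuned against $\sup_{L_n}\phi_\X$ so that the splitting $\phi_{\Pr\X}(\mu)=\int_{L_n^c}\phi_\X\d\mu+\int_{L_n}\phi_\X\d\mu$ controls both pieces simultaneously. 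You instead split the compactness criterion itself into tightness plus uniform $p$-integrability: the tightness half then only requires making $\nu(C^c)$ small (not an integral of an unbounded function over $C^c$), which dispenses with the $L_n$/$M_n$ interplay, and the integrability half is carried by your self-improving tail estimate $F(\mu)\le 2g(r,\mu)$ on $\{F(\mu)>2r^p\}$ --- which is exactly the point where, as you observe, a naive Jensen bound would fail. The trade-off is that you must invoke the two-condition characterization of relative $\Wa$-compactness (tightness plus uniform $p$-integrability), which is standard and equivalent to Lemma \ref{lem:WpProkhorov} but is not the form the paper has set up. Two minor points to tidy: arrange $R_n\to\infty$ (harmless --- replace $R_n$ by $R_n\vee n$, which only shrinks the tail integrals) so that the bounds $\beta_n$ genuinely give uniform $p$-integrability of $C$; and note that $C$ is already closed, since its defining inequalities are stable under weak limits by the portmanteau theorem ($K_n$ closed, and $x\mapsto\indicator{\{\D(x_0,x)>R_n\}}\D(x_0,x)^p$ lower semicontinuous), so taking the closure is not actually needed.
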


Lemmata \ref{lem:undAndRelCo}, \ref{lem:compinPrXtimesY}, and \ref{lem:compinPrPrX} have been proved elsewhere.
Lemma \ref{lem:compinPrXtimesY} is very well known for the weak topology --- i.e.\ in the case where the metrics on the base spaces are bounded. In the current setting the proof is only a little more intricate.
Lemma \ref{lem:compinPrPrX} can be found for example in \cite[p. 178, Ch. II]{Sz91} for the weak topology and in \cite{BaBePa18} for our setting. Lemma \ref{lem:undAndRelCo} is also proved there.
For completeness we also provide their proofs here.

We make use of the following variant of Prokhorov's theorem.
\begin{lemma}
  \label{lem:WpProkhorov}
  Let $\X$ be a Polish metric space, let $x_0 \in \X$ be fixed.
  $K \subseteq \Pr \X$ is relatively compact iff for all $\epsilon > 0$ there is a compact set $L \subseteq \X$ with
  \begin{align*}
    \int_{L^c} 1 + \D(x_0, x)^p \d \mu(x) < \epsilon
  \end{align*}
  for all $\mu \in K$.
\end{lemma}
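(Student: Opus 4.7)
The plan is to deduce this from the classical equivalence that $K \subseteq \Pr{\X}$ is $\Wa$-relatively compact if and only if $K$ is tight in the weak topology and the $p$-th moments are uniformly integrable over $K$, i.e.\ $\lim_{R \to \infty} \sup_{\mu \in K} \int_{\D(x_0,x) > R} \D(x_0,x)^p \d\mu(x) = 0$. The stated hypothesis bundles both conditions together: $\mu(L^c) \leq \int_{L^c}(1 + \D(x_0,x)^p)\d\mu < \epsilon$ is tightness, while picking any $R > \sup_{x \in L} \D(x_0,x)$ (finite, since $L$ is compact) yields $\{\D(x_0,x) > R\} \subseteq L^c$ and hence the uniform moment-tail estimate.

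For the sufficient direction I would, given a sequence in $K$, extract via Prokhorov a weakly convergent subsequence $\mu_{n_k} \to \mu$ and promote it to $\Wa$-convergence via the well-known criterion that $\Wa$-convergence equals weak convergence plus convergence of $p$-th moments. The moment convergence follows from weak convergence, continuity of $x \mapsto \D(x_0,x)^p$, and uniform $p$-integrability by a standard truncation and Portmanteau argument; taking $\epsilon = 1$ in the hypothesis yields a uniform bound on the $p$-th moments, so the limit $\mu$ has finite $p$-th moment, i.e.\ $\mu \in \Pr{\X}$.

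For the necessary direction, $\Wa$-compactness trivially gives weak tightness via Prokhorov. Uniform $p$-integrability I would obtain by contradiction: its failure would produce $\epsilon > 0$, $R_n \nearrow \infty$, and $\mu_n \in K$ with $\int_{\D(x_0,x) > R_n} \D(x_0,x)^p \d\mu_n \geq \epsilon$. A $\Wa$-convergent subsequence $\mu_{n_k} \to \mu$ then has $p$-th moments converging to $\int \D(x_0,x)^p \d\mu < \infty$, so fixing a continuity point $R$ with $\int_{\D(x_0,x) > R} \D(x_0,x)^p \d\mu < \epsilon/2$, the same truncation/Portmanteau argument gives $\int_{\D(x_0,x) > R_{n_k}} \D(x_0,x)^p \d\mu_{n_k} \leq \int_{\D(x_0,x) > R} \D(x_0,x)^p \d\mu_{n_k} < \epsilon$ for $k$ large, contradicting the choice of the sequence.

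It remains to combine weak tightness and uniform $p$-integrability into a single compact set. Given $\epsilon > 0$, first choose $R$ so large that $\sup_{\mu \in K} \int_{\D(x_0,x) > R}(1 + \D(x_0,x)^p)\d\mu < \epsilon/2$ (both $\mu(\D(x_0,x) > R) \leq R^{-p}\int \D(x_0,x)^p \d\mu$ and the moment tail go to zero uniformly, using that the $p$-th moments are uniformly bounded by $\Wa$-compactness), then pick a compact $L_\delta$ from tightness with $\delta := \epsilon/(2(1 + R^p))$, and set $L := L_\delta \cap \overline{B(x_0,R)}$, which is compact as a closed subset of $L_\delta$. Splitting $L^c$ into $(L_\delta^c \cap B(x_0,R)) \cup \{\D(x_0,x) > R\}$ and bounding each piece separately delivers the required estimate. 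The main obstacle is the contradiction argument establishing uniform $p$-integrability from $\Wa$-compactness; the rest is bookkeeping with the continuous mapping and Portmanteau properties of weak convergence.
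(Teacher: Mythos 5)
Your proof is correct, but it follows a genuinely different route from the paper's. The paper pushes $\mu$ forward to the finite positive measure with density $1 + \D(x_0,\cdot)^p$ with respect to $\mu$; by the standard characterization of the $\Wa$-topology (Villani, Definition 6.8(iv) and Theorem 6.9) this map is a homeomorphism of $\Pr \X$ onto a closed subset of the finite positive measures with the weak topology, so the lemma is literally Prokhorov's theorem for finite measures read back through this homeomorphism, the only extra observation being that the uniform mass bound is already implied by the tail condition at $\epsilon = 1$. You instead decouple the single tail condition into tightness plus uniform integrability of the $p$-th moments, and prove the equivalence with relative compactness by sequential extraction together with the criterion \enquote{$\Wa$-convergence $=$ weak convergence $+$ convergence of $p$-th moments}, using a contradiction argument for uniform integrability in the necessary direction and then recombining the two conditions into one compact set by intersecting a tightness set with a closed ball of radius $R$. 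Both arguments ultimately lean on the same standard facts about the $\Wa$-topology; the paper's version is shorter and entirely sequence-free, while yours makes the familiar \enquote{tightness $+$ uniform integrability} formulation explicit and only invokes the classical Prokhorov theorem for probability measures. The delicate points in your write-up --- the choice of a continuity radius $R$ with $\mu(\D(x_0,x) = R) = 0$ in the truncation/Portmanteau step, the lower semicontinuity needed to conclude that the weak limit has finite $p$-th moment, and the compactness of $L_\delta \cap \set{x}[\D(x_0,x) \leq R]$ as a closed subset of $L_\delta$ rather than of the (possibly non-compact) closed ball --- are all handled correctly.
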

The integrand above will pop up a few times. Let us fix at this point for each Polish metric space $\X$ we will be talking about a point $x_0 \in \X$, and let us agree to do this in a compatible manner, i.e.\ if $x_0$ is the point we have chosen in $\X$ and $y_0$ is the point we have chosen in $\Y$, in $\X \times \Y$ we will chose $(x_0,y_0)$. Similarly, in $\Pr \X$ we choose $\delta_{x_0}$, the dirac measure at $x_0$.
With this convention, define for any Polish metric space $\X$
\renewcommand{\phi}{\varphi}
\begin{align*}
  \phi_\X(x) := 1 + \D(x_0,x)^p \fullstop
\end{align*}
Note that
\begin{align*}
  \phi_{\X \times \Y}(x,y) & = \phi_\X(x) + \phi_\Y(y) - 1 &
  \phi_{\Pr \X}(\nu) & = \int \phi_\X \d \nu
\end{align*}
\begin{proof}[Proof of Lemma \ref{lem:WpProkhorov}]
  As may be common knowledge, the topology induced by $\Wa$ is equal to the initial topology w.r.t.\ the map $\psi$ which send $\mu \in \Pr \X$ to the measure which as density $\phi_\X$ w.r.t.\ $\mu$, when the target space of finite positive measures is equipped with the weak topology. (This can be found for example in \cite[Definition 6.8 (iv) and Theorem 6.9]{Vi09}.) $\psi$ is injective, and surjective onto the closed set of all finite positive measures $\nu$ satisfying
  \begin{align*}
    \int \frac 1 {\phi_\X(x)} \d \nu(x) = 1 \fullstop
  \end{align*}
  $\Pr \X$ is therefore homeomorphic to this set. Translating Prokhorov's theorem for finite positive measures to $\Pr \X$ via $\psi$ gives that $K \subseteq \Pr \X$ is relatively compact iff
  \begin{enumerate}
    \item $\exists M \in \R_+$ s.t.\ $\int \phi_\X \d \mu < M$ for all $\mu \in K$
    \item $\forall \epsilon > 0$ there is a compact set $L \subseteq \X$ s.t.\ $\int_{L^c} \phi_\X \d \mu < \epsilon$.
  \end{enumerate}
  (1) is redundant because we may apply (2) for $\epsilon = 1$ to find a compact set $L$ s.t.\ $\int_{L^c} \phi_\X \d \mu < 1$. $\phi_\X$ is continuous and therefore bounded on $L$, say by $M'$, so that
  \begin{align*}
    \int \phi_\X \d \mu = \int_{L} \phi_\X \d \mu + \int_{L^c} \phi_\X \d \mu \leq M' + 1 =: M \fullstop
  \end{align*}
\end{proof}

\begin{proof}[Proof of Lemma \ref{lem:compinPrXtimesY}]
  $\mu \mapsto \marg \mu \X$ and $\mu \mapsto \marg \mu \Y$ are continuous, so one direction is clear.

  If $K_\X$ and $K_\Y$ are relatively compact, then for any $\epsilon > 0$ there are compact sets $M \subseteq \X$ and $N \subseteq \Y$ s.t.\
  \begin{align}
    \label{eq:MN}
    \int_{M^c} \phi_\X \d (\marg \mu \X) & < \frac \epsilon 4 & 
    \int_{N^c} \phi_\Y \d (\marg \mu \Y) & < \frac \epsilon 4
  \end{align}
  for all $\mu \in K$.
  Because $\phi_\X, \phi_\Y \geq 1$ we also find compact $\bar M \subseteq \X$, $\bar N \subseteq \Y$ s.t.\
  \begin{align}
    \label{eq:bMN}
    \marg \mu \X (\bar M^c) & \leq \frac 1 {\sup_{N} \phi_\Y} \cdot \frac \epsilon 4 &
    \marg \mu \Y (\bar N^c) & \leq \frac 1 {\sup_{M} \phi_\X} \cdot \frac \epsilon 4 \fullstop
  \end{align}
  We show that for $L := M \times \bar N \cup \bar M \times N$ and for all $\mu \in K$
  \begin{align*}
    \int_{L^c} \phi_{\X \times \Y} \d\mu \leq \epsilon \fullstop
  \end{align*}
  
  $\phi_{\X \times \Y}(x,y) < \phi_\X(x) + \phi_\Y(y)$, so we show
  \begin{align*}
    \int_{L^c} \phi_\X(x) \d\mu(x,y) \leq \frac \epsilon 2 \fullstop
  \end{align*}
  $\int_{L^c} \phi_\Y(y) \d\mu(x,y) \leq \frac \epsilon 2$ will follow by symmetry.

  $L^c \subseteq (M \times \bar N)^c = M^c \times \Y \cup M \times N^c$ and therefore
  \begin{align*}
    \int_{L^c} \phi_\X(x) \d\mu(x,y) \leq \int_{M^c \times \Y} \phi_\X(x) \d\mu(x,y) + \int_{M \times N^c} \phi_\X(x) \d \mu(x,y)
  \end{align*}
  The first summand is $\leq \frac \epsilon 4$ by \eqref{eq:MN}, while the second term is bounded by
  \begin{align*}
    \sup_M \phi_\X \cdot \int_{M \times N^c} 1 \d\mu \leq \sup_M \phi_\X \cdot \mu(\X \times N^c) \leq \frac \epsilon 4
  \end{align*}
  by \eqref{eq:bMN}.
\end{proof}

\begin{proof}[Proof of Lemma \ref{lem:compinPrPrX}]
  The left-to-right direction is again obvious because $\avg \X$ is continuous.

  For the other direction we show that for all $\epsilon > 0$ there is a compact set $N \subseteq \Pr \X$ such that for all $\mu \in K$ we have $\int_{N^c} \phi_{\Pr \X} \d \mu \leq \epsilon$.

  Because $\avg \X[K]$ is relatively compact there is for each $n \in \N_+$ a compact set $L_n \subseteq \X$ such that 
  \begin{align}
    \label{eq:Ln}
    \int_{L_n^c} \phi_\X \d(\avg \X(\mu)) \leq \frac \epsilon 2 \cdot 2^{-n} \fullstop
  \end{align}
  We also find for each $n \in \N_+$ a compact set $M_n \subseteq \X$ such that we even have
  \begin{align}
    \label{eq:Mn}
    \int_{M_n^c} \phi_\X \d(\avg \X(\mu)) \leq \frac \epsilon 2 \cdot \frac 1 {\sup_{L_n} \phi} \cdot \frac 1 n \cdot 2^{-n} \fullstop
  \end{align}

  Define
  \begin{align*}
    N & := \set { \nu \in \Pr \X }[ \smallint_{M_n^c} \phi_\X \d\nu \leq \frac 1 n \,\, \forall n ] \text{ ,}
  \end{align*}
  i.e.\ $N = \bigcap_{n \geq 1} N_n$, where
  \begin{align}
    \label{eq:Nn}
    N_n & := \set { \nu \in \Pr \X }[ \smallint_{M_n^c} \phi_\X \d\nu \leq \frac 1 n ] \fullstop
  \end{align}
  Clearly $N$ is compact, again by Lemma \ref{lem:WpProkhorov}.

  We show that for each $\mu \in K$ and for all $n \geq 1$ we have $\int_{N_n^c} \phi_{\Pr \X} \d\mu \leq \epsilon \cdot 2^{-n}$, because then $\int_{N^c} \phi_{\Pr \X} \d\mu = \int_{\pa{\bigcup_{n \geq 1} N_n^c}} \phi_{\Pr \X} \d\mu \leq \sum_{n \geq 1} \int_{N_n^c} \phi_{\Pr \X} \d\mu \leq \epsilon$.

  \begin{multline*}
    \int_{N_n^c} \phi_{\Pr \X} \d \mu = \int_{N_n^c} \int \phi_\X \d\nu \d\mu(\nu) = 
    \int_{N_n^c} \int_{L_n^c} \phi \d\nu \d\mu(\nu) + \int_{N_n^c} \int_{L_n} \phi \d\nu \d\mu(\nu)
  \end{multline*}
  The first summand is $\leq \frac \epsilon 2 \cdot 2^{-n}$ by \eqref{eq:Ln}.
  The second summand we may bound by
  \begin{align*}
    \sup_{L_n} \phi \cdot \int_{N_n^c} 1 \d\mu(\nu) \leq \sup_{L_n} \phi \cdot n \cdot \int_{N_n^c} \int_{M_n^c} \phi \d\nu \d\mu(\nu) \leq \frac \epsilon 2 \cdot 2^{-n} \fullstop
  \end{align*}
  Here we used first \eqref{eq:Nn} and then \eqref{eq:Mn}.
\end{proof}

\section{Other Applications of the Modulus of Continuity}

In this section we give a new proof for Theorem \ref{thm:dprocont} below. \cite{barbiegupta} gave a different proof for the weak topology, i.e.\ for what in our setting corresponds to the case when the metrics on our base spaces are bounded.

The proof uses Lemma \ref{lem:helper} below, which is also used in the companion paper to this one, \cite{AllTopologiesAreEqual}, as an important ingredient in proving that the information topology of Hellwig is equal to the nested weak topology.

\begin{theorem}
  \label{thm:dprocont}
  Let $\mu \in \Pr { \X \times \Y }$, $\nu \in \FunP {\Y} {\Z}$. Then $\dprod$ is continuous at $(\mu, \nu)$.
\end{theorem}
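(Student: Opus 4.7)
The plan is to prove sequential continuity: for $(\mu_n, \nu_n) \to (\mu, \nu)$ in $\Wa$ (staying within the domain of $\dprod$, i.e.\ with the marginal-matching constraint $\marg{\mu_n}{\Y} = \marg{\nu_n}{\Y}$ on each side), I will build couplings $\Gamma_n \in \Cpl{\mu \dprod \nu}{\mu_n \dprod \nu_n}$ whose $L^p$-cost tends to zero, which delivers $\Wa(\mu_n \dprod \nu_n, \mu \dprod \nu) \to 0$. The crucial input on $\nu$ is Lemma \ref{lem:FunPCharacterisation}: $\nu \in \FunP{\Y}{\Z}$ translates quantitatively into $\moc{\nu}{\delta} \to 0$ as $\delta \searrow 0$, giving us a handle on how little $\Z$-variation a small $\Y$-perturbation of $\nu$ can produce.

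For each $n$, take near-optimal couplings $\psi_n \in \Cpl{\mu}{\mu_n}$ and $\phi_n \in \Cpl{\nu}{\nu_n}$ realising $\Wa(\mu, \mu_n)^p$ and $\Wa(\nu, \nu_n)^p$ up to a small additive error. The first candidate for $\Gamma_n$ is the most natural one: sample $(x_1, y_1, x_2, y_2) \sim \psi_n$, then conditionally and independently sample $z_1 \sim \nu_{y_1}$ and $z_2 \sim (\nu_n)_{y_2}$. A direct check of the two triple-marginals confirms $\Gamma_n \in \Cpl{\mu \dprod \nu}{\mu_n \dprod \nu_n}$, and the $(x,y)$-part of the $p$-cost is at once bounded by $\int \D((x_1,y_1),(x_2,y_2))^p \d\psi_n \approx \Wa(\mu, \mu_n)^p$.

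The heart of the matter is controlling $\int \D(z_1, z_2)^p \d\Gamma_n$. My plan is a triangle inequality in $L^p(\Gamma_n)$ against an auxiliary $z_3$ drawn so that $(y_2, z_3)$ is distributed according to a disintegration of $\phi_n$, and hence is effectively $\nu$-distributed. The summand $\D(z_2, z_3)$ is then dominated in $L^p$ by the cost of $\phi_n$, i.e.\ by $\Wa(\nu, \nu_n)$. The summand $\D(z_1, z_3)$ is where the modulus of continuity enters: the joint $(y_1, z_1, y_2, z_3)$, after discarding the $\X$-coordinates, is a measure on $(\Y \times \Z)^2$ whose $\X$-marginals are both $\nu$ (modulo renormalisation) and whose $\Y$-displacement $\D^\Y$ is of order $\Wa(\mu, \mu_n) + \Wa(\nu, \nu_n)$. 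Up to rescaling (using \eqref{eq:DXhomog}), this measure lies in $\Sh{\nu}{\delta_n}$ with $\delta_n \to 0$, so its $\D^\Z$ is bounded by $\moc{\nu}{\delta_n} \to 0$.

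The main obstacle is the bookkeeping in the gluing step: $\psi_n$ and $\phi_n$ each induce a coupling of the common marginals $\marg{\mu}{\Y} = \marg{\nu}{\Y}$ and $\marg{\mu_n}{\Y} = \marg{\nu_n}{\Y}$ on $\Y^2$, and these two couplings are in general different, so fabricating a single coherent measure on the full product over which the above triangle-inequality argument can be carried out requires some care. This is precisely what Lemma \ref{lem:helper} (flagged here as the common technical ingredient with the companion paper) is designed to accomplish. Once the gluing is in place, each of the three bounding terms $\Wa(\mu, \mu_n)$, $\Wa(\nu, \nu_n)$, and $\moc{\nu}{\delta_n}$ vanishes as $n \to \infty$, which closes the argument.
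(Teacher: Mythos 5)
Your proposal is correct and follows essentially the same route as the paper: the coupling $\Gamma_n$ you build (conditionally independent gluing of the disintegrations of $\nu$ and $\nu_n$ over a near-optimal coupling of $\mu$ and $\mu_n$) is exactly the measure $\chi$ in the paper's proof, and your control of $\D(z_1,z_2)$ via an auxiliary copy of $\nu$ and the modulus of continuity is precisely the content of Lemma \ref{lem:helper}, which the paper applies to the $\Y\times\Z\times\Y\times\Z$-marginal of $\chi$. The only caveat is a small imprecision in your marginal bookkeeping --- the auxiliary pair should be $(y_3,z_3)$ drawn from the $\nu$-side of $\phi_n$ rather than $(y_2,z_3)$, so that both marginals of the four-tuple really are $\nu$ before invoking the modulus of continuity --- but this is exactly the step that Lemma \ref{lem:helper} packages, so deferring to it as you do is fine.
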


\begin{lemma}
  \label{lem:helper}
  Let $\mu \in \FunP \X \Y$. For any $\epsilon > 0$ there is a $\delta > 0$ s.t. if
  \begin{align*}
    \nu \in \Pr { \X \times \Y } & \text{ with } \W \mu \nu < \delta \text{ and} \\
    \gamma \in \Cpl \mu \nu      & \text{ with } \D^\X(\gamma) < \delta
  \end{align*}
  then
  \begin{align*}
    \D^\Y(\gamma) < \epsilon \fullstop
  \end{align*}
\end{lemma}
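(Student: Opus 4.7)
The plan is to reduce the bound on $\D^\Y(\gamma)$ to an application of the modulus of continuity $\mocf \mu$, which vanishes at $0$ by Lemma \ref{lem:FunPCharacterisation} thanks to the hypothesis $\mu \in \FunP \X \Y$. The core difficulty is that $\gamma$ is a coupling between $\mu$ and a possibly different measure $\nu$, so $\gamma$ itself need not lie in $\Sh \mu {\cdot}$, whose elements must have \emph{both} $\X \times \Y$-marginals dominated by $\mu$. To remedy this, I would pick a nearly $\Wa$-optimal $\psi \in \Cpl \nu \mu$ (viewed as a measure on $(\X \times \Y) \times (\X \times \Y)$) for which $\D^\X(\psi), \D^\Y(\psi) < \delta$; this is possible since $\D^\X(\psi)^p + \D^\Y(\psi)^p$ is the cost in $\Wa$ and $\W \mu \nu < \delta$.

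Next, I would view $\gamma$ and $\psi$ as measures on $(\X \times \Y) \times (\X \times \Y)$ sharing the middle marginal $\nu$ and form the composition $\gamma \mcmp \psi$ along $\nu$ using Definition \ref{def:mcmp} (with the role of $\Y$ there played by $\X \times \Y$). The result is a probability measure on $(\X \times \Y) \times (\X \times \Y)$ both of whose $(\X \times \Y)$-marginals equal $\mu$, hence admissible in $\Sh \mu {\cdot}$. Passing through the three-variable intermediate $\gamma \dprod \psi$ on $(\X \times \Y)^3$ and applying the triangle inequality in $L^p(\gamma \dprod \psi)$, exactly as in the proof of Lemma \ref{lem:moccontinmu}, yields
\begin{align*}
  \D^\X(\gamma \mcmp \psi) &\leq \D^\X(\gamma) + \D^\X(\psi) < 2\delta, \\
  \D^\Y(\gamma) &\leq \D^\Y(\gamma \mcmp \psi) + \D^\Y(\psi) < \D^\Y(\gamma \mcmp \psi) + \delta.
\end{align*}

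The first inequality places $\gamma \mcmp \psi$ in $\Sh \mu {2\delta}$, so by definition $\D^\Y(\gamma \mcmp \psi) \leq \moc \mu {2\delta}$; feeding this into the second inequality gives $\D^\Y(\gamma) \leq \moc \mu {2\delta} + \delta$. Since $\lim_{\delta \searrow 0} \moc \mu {2\delta} = 0$ by Lemma \ref{lem:FunPCharacterisation}, choosing $\delta$ small enough forces the right-hand side below $\epsilon$. The main obstacle, as always with this machinery, is the bookkeeping needed to set up the three-point distribution and identify $\gamma \mcmp \psi$ as the correct object living in $\Sh \mu {\cdot}$; once that is in place, the conclusion is immediate from the triangle inequality and Lemma \ref{lem:FunPCharacterisation}.
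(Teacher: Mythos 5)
Your proposal is correct and follows essentially the same route as the paper: compose $\gamma$ with a near-optimal coupling $\psi\in\Cpl \nu \mu$ along the shared marginal $\nu$ to obtain a self-coupling of $\mu$ in $\Sh \mu {2\delta}$, then apply the $L^p$ triangle inequality and Lemma \ref{lem:FunPCharacterisation}. The paper's proof is the same argument with $\psi$ called $\eta$ and the constants fixed up front ($\delta := \frac{\delta'}{2}\meet\frac{\epsilon}{2}$ with $\moc \mu {\delta'} < \frac{\epsilon}{2}$).
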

\begin{proof}[Proof of Lemma \ref{lem:helper}]
  By Lemma \ref{lem:FunPCharacterisation} we can find $\delta' > 0$ such that $\moc \mu {\delta'} < \frac \epsilon 2$. Set $\delta := \frac {\delta'} 2 \meet \frac \epsilon 2$.

  Let $\Wa(\mu, \nu) = \Wa(\nu, \mu) < \delta$ and let $\gamma \in \Cpl {\mu} {\nu}$ with $\D^\X(\gamma) < \delta$. The former implies that there is a $\eta \in \Cpl {\nu} {\mu}$ with $\D^\X(\eta) \join \D^\Y(\eta) < \delta$.

  Then $\gamma \mcmp \eta \in \Cpl {\mu} {\mu}$ and $\D^\X(\gamma \mcmp \eta) \leq \D^\X(\gamma) + \D^\X(\eta) < 2 \delta \leq \delta' $. This means that $\gamma \mcmp \eta \in \Sh \mu {\delta'}$ and therefore that $\D^\Y(\gamma \mcmp \eta) < \frac \epsilon 2$.

{\def\unused{}%
  \begin{multline*}
    \D^\Y(\gamma) = \pa{\int \D(y_1,y_2)^p \d\pa{\gamma \dprod \eta}(\unused{x_1},y_1,\unused{x_2},y_2,\unused{x_3},\unused{y_3})}^{\frac 1 p} \leq \\
    \pa{\int \pa{\D(y_1,y_3) + \D(y_3,y_2)}^p \d\pa{\gamma \dprod \eta}(\unused{x_1},y_1,\unused{x_2},y_2,\unused{x_3},\unused{y_3})}^{\frac 1 p} \leq \\
    \D^\Y(\gamma \mcmp \eta) + \D^\Y(\eta) < \frac \epsilon 2 + \delta \leq \epsilon
  \end{multline*}
}%
\end{proof}

\newcommand{\tgamma}{\tilde\gamma}

\begin{proof}[Proof of Theorem \ref{thm:dprocont}]
  Let $\epsilon > 0$. Find by Lemma \ref{lem:helper} $\delta > 0$ s.t.\ for all $\nu'$ with $\Wa(\nu,\nu') < \delta$ and all $\kappa \in \Cpl {\nu} {\nu'}$ satisfying $\D^\Y(\kappa) < \delta$ we have $\D^\Z(\kappa) < \epsilon$.


  Let $\nu' \in \Pr { \Y \times \Z }$ s.t.\ $\Wa(\nu,\nu') < \delta$ and let $\mu \in \Pr { \X \times \Y }$ s.t.\ $\Wa(\mu,\mu') < \delta \meet \epsilon$, witnessed by $\gamma \in \Cpl {\mu} {\mu'}$ with $\D^\X(\gamma) \join \D^\Y(\gamma) < \delta \meet \epsilon$.

  From $\gamma$, $\nu$ and $\nu'$ we may use $\dprod$ twice to define a measure $\chi \in \Cpl {\mu \dprod \nu} {\mu' \dprod \nu'}$ which has marginals as shown in the picture below.
  
  \begin{center}
    \begin{tikzpicture}[%
      mymatrix/.style={matrix of nodes,
      column sep=5em,
      row sep=1em},
      node distance=0em and 0em
      ]
      \matrix[mymatrix] (mx) {%
        $\X$ & $\Y$ & $\Z$ \\
        $\X$ & $\Y$ & $\Z$ \\
      };
      \node[fit=(mx-1-1) (mx-1-2), draw, rounded corners, inner sep=0em] (m) {};
      \node[above=-0.5em of m, fill=white, inner sep=0] (mu) {$\mu$};
      \node[fit=(mx-2-1) (mx-2-2), draw, rounded corners, inner sep=0em] (m') {};
      \node[below=-0.5em of m', fill=white, inner sep=0] (mu') {$\mu'$};
      \node[fit=(mx-1-1) (mx-1-2) (mx-2-1) (mx-2-2) (mu) (mu'), draw, rounded corners, inner xsep=.9em, inner ysep=1em] (g) {};
      \node[left=-0.5em of g, fill=white, inner sep=0.2em] (gamma) {$\gamma$};
      \node[fit=(mx-1-2) (mx-1-3), draw, rounded corners, inner sep=.3em] (n) {};
      \node[above=-0.4em of n, fill=white, xshift=1em, inner sep=.1em] (nu) {$\nu$};
      \node[fit=(mx-2-2) (mx-2-3), draw, rounded corners, inner sep=.3em] (n') {};
      \node[below=-0.5em of n', fill=white, xshift=1em, inner sep=.1em] (nu') {$\nu'$};
    \end{tikzpicture}
  \end{center}

  In other words, with $(\nu_y)_y$ a disintegration of $\nu$ w.r.t.\ $\Y$, and similarly for $\nu'$, 
  \begin{align*}
    \int f \d\chi = \iiint f(x,y,z,x',y',z') \d\nu'_{y'}(z') \d\nu_y(z) \d\gamma(x,y,x',y') \fullstop
  \end{align*}

  Setting $\kappa := \marg \chi {\Y \times \Z \times \Y \times \Z}$ we have $\D^\Y(\kappa) = \D^\Y(\gamma) < \delta$, and by our choice of $\delta$ and $\nu'$, $\D^\Z(\kappa) \leq \epsilon$.
  Now $\D^\X(\chi) = \D^\X(\gamma)$, $\D^\Y(\chi) = \D^\Y(\gamma)$, $\D^\Z(\chi) = \D^\Z(\kappa)$ and therefore
  \begin{align*}
    \Wa(\mu \dprod \mu', \nu \dprod \nu') \leq \D^\X(\chi) + \D^\Y(\chi) + \D^\Z(\chi) \leq 3 \epsilon \fullstop
  \end{align*}
\end{proof}

\bibliographystyle{abbrv}
\bibliography{lib/own,lib/joint_biblio}{}

\end{document}